\theoremstyle{plain}
\newtheorem{thm}{Theorem}[section]
\newtheorem{prop}[thm]{Proposition}
\newtheorem{cor}[thm]{Corollary}
\newtheorem{lem}[thm]{Lemma}
\theoremstyle{definition}
\newtheorem{notation}[thm]{Notation}
\theoremstyle{remark}
\numberwithin{equation}{section}
\newcommand{\sN}{\mathcal{N}}
\newcommand{\sS}{\mathcal{S}}
\newcommand{\sT}{\mathcal{T}}
\newcommand{\bt}{\mathbf{t}}
\newcommand{\bs}{\mathbf{s}}
\newcommand{\br}{\mathbf{r}}
\newcommand{\bbr}{\bar{\mathbf{r}}}
\newcommand{\wL}{\widetilde{L}}
\newcommand{\wsN}{\widetilde{\sN}}
\newcommand{\wrho}{\widetilde{\rho}}
\newcommand{\del}{\partial}
\newcommand{\CC}{\mathbb{C}}
\newcommand{\QQ}{\mathbb{Q}}
\newcommand{\ZZ}{\mathbb{Z}}
\newcommand{\nin}{\noindent}
\newcommand{\ra}{\rightarrow}
\newcommand{\xra}{\xrightarrow}
\newcommand{\co}{\colon\!}
\newcommand{\BO}{\textup{BO}}
\newcommand{\G}{\textup{G}}
\newcommand{\TOP}{\textup{TOP}}
\newcommand{\Gsign}{\textup{G-sign}}
\renewcommand{\mod}{\textup{mod }}
\newcommand{\red}{\textup{red}}
\newcommand{\RhG}{R_{\widehat G}}
\newcommand{\RhGp}{R_{\widehat G}^+}
\newcommand{\RhGm}{R_{\widehat G}^-}
\newcommand{\proj}{\textup{proj}}
\title[Higher simple structure sets of lens spaces]{Higher simple structure sets of lens spaces \\ with the fundamental group of arbitrary order}
\author{L'udov\'it Balko}
\author{Tibor Macko}
\author{Martin Niepel}
\author{Tom\'a\v s Rusin}
\subjclass[2010]{Primary: 57R65, 57S25}
\keywords{fake lens space, higher structure set, $\rho$-invariant, surgery}
\address{Faculty of Mathematics, Physics, and Informatics, Comenius University, Mlynsk\'a dolina,
SK-824 48, Bratislava, Slovakia} \email{ludovit.balko@fmph.uniba.sk} \email{tibor.macko@fmph.uniba.sk} \email{martin.niepel@fmph.uniba.sk} \email{tomas.rusin@fmph.uniba.sk} 
\address{Institute of Mathematics, Slovak Academy of Sciences, \v Stef\'anikova 49, Bratislava, SK-81473, Slovakia} \email{macko@mat.savba.sk}
\thanks{This work was supported by the grant VEGA 1/0101/17, and by the Slovak Research and Development Agency under the contract No. APVV-16-0053.}
\date{\today}
\begin{document}

\maketitle

\begin{abstract}
	Extending work of many authors we calculate the higher simple structure sets of lens spaces in the sense of surgery theory with the fundamental group of arbitrary order. As a corollary we also obtain a calculation of the simple structure sets of the products of lens spaces and spheres of dimension grater or equal to $3$.
\end{abstract}



\section*{Introduction}
\label{sec:intro}


This paper is to be considered a second part of~\cite{BMNR(2018)}. There we calculated the topological higher simple structure sets $\sS^{s}_{\del} (L \times D^{m})$ in the sense of surgery theory for $L$ a fake lens space with the fundamental group $\ZZ/N \cong G = \pi_{1} (L)$ for $N = 2^{K}$ and $m \geq 1$. The main result of the present paper, Theorem~\ref{thm:main-thm}, calculates $\sS^{s}_{\del} (L \times D^{m})$ when $\ZZ/N \cong G = \pi_{1} (L)$ has $N$ arbitrary and $m \geq 1$.

Let us recall that the case $m=0$ was done for $N=M$ odd in \cite[14.E]{Wall(1999)}, for $N=2$ in \cite[14.D]{Wall(1999)} and \cite{LdM(1971)}, and for $N=2^K$ with $K \geq 2$ in \cite{Macko-Wegner(2009)}. These results were then combined in \cite{Macko-Wegner(2011)} to obtain the general case $N=2^K \cdot M$ with $M$ odd.

For the case $m \geq 1$ calculations for $N=M$ odd and $N=2$ appeared in \cite{Madsen-Rothenberg(1989)}. As already mentioned, the case $N=2^K$ is done in \cite{BMNR(2018)}. The present paper combines these results to obtain the general case $N=2^K \cdot M$ with $M$ odd. We follow the same general idea about the combination as in \cite{Macko-Wegner(2011)}, but we need to make some adjustments. Recall that the key part of the calculations concerns in all of the above papers the kernel of the so-called $\rho$-invariant on the group of normal invariants. When $m=0$ all the groups of normal invariants are finite. However, when $m \geq 1$ this is no longer the case. This is the main technical obstacle that we address in this paper. Roughly speaking, our key idea in doing so is to use the formula for the $\rho$-invariant from Theorem 5.5  in~\cite{BMNR(2018)}, which we proved in the general case $N =2^K \cdot M$ with $M$ odd, to show that the $\rho$-invariant map in all the cases factors through a finite group, see Lemmas~\ref{lem:rho-factors-through-finite-2-K}, \ref{lem:rho-factors-through-finite-M}, \ref{lem:rho-factors-through-finite-N}. We then obtain the desired kernel by combining the kernels of the two factoring maps. One of them is obvious and the other is obtained by the method of \cite{Macko-Wegner(2011)} since its source is finite, see Theorem~\ref{thm:K-bar-N} and Corollary~\ref{cor:T-N}.

The paper is organized as follows. In Section~\ref{sec:results} we state the main Theorem~\ref{thm:main-thm} and Corollary~\ref{cor:higehr-str-sets-L-times-S}. These are deliberately formulated to be analogues of the main results from~\cite{BMNR(2018)} where $N = 2^{K} \cdot M$ with $M$ odd in the present case. In Section 2 we describe the changes to the setup of~\cite{BMNR(2018)} for the normal invariants that are necessary to accommodate the present case, and in Sections 3 and 4 we present the techniques and calculations for the proof of the main theorem.


\section{Results}
\label{sec:results}


A {\it fake lens space} $L = L (\alpha)$ is a topological manifold given as the orbit space of a free action $\alpha$ of a finite cyclic group $G = \ZZ/N$ on a sphere $S^{2d-1}$. The main result of this paper is the following theorem about them.

\begin{thm}\label{thm:main-thm}
	Let $L = L (\alpha)$ be a $(2d-1)$-dimensional fake lens space for some free action $\alpha$ of the cyclic group $G = \ZZ/N$ with $N = 2^K \cdot M$ for some $K \geq 1$ and $M$ odd on $S^{2d-1}$ with $d \geq 2$ and let $k \geq 1$. Then we have isomorphisms
	\begin{align*}
		(\wrho_{\del},\bbr_{0},\bbr,\br) \co \sS^{s}_{\del} (L \times D^{2k}) & \xra{\cong} 
		\begin{cases}
			F^{+} \oplus \ZZ \oplus T'_{2^K} \oplus T_{2} \quad & d = 2e, k=2l \\
			F^{-} \oplus \ZZ/2 \oplus T'_{2^K} \oplus T_{2} \quad & d = 2e, k=2l+1 \\
			F^{-} \oplus \ZZ \oplus T'_{2^K} \oplus T_{2} \quad & d = 2e+1, k=2l \\
			F^{+} \oplus \ZZ/2 \oplus T'_{2^K}  \oplus T_{2} \quad & d = 2e+1, k=2l+1 
		\end{cases} \\
		(\bbr,\br) \co \sS^{s}_{\del} (L \times D^{2k+1}) & \xra{\cong} T_{2} (\textup{odd}) \quad \textup{also} \; k=0,
	\end{align*}
	where the meaning of the symbols in the target is as follows:
	\begin{enumerate}
		\item $F^{+}$ is a free abelian group of rank $2^{K-1} \cdot M$;
		\item $F^{-}$ is a free abelian group of rank $2^{K-1} \cdot M - 1$;
		\item Let $c_N (d,k) = e-1$ when $(d,k)=(2e,2l)$ and let $c_N (d,k) = e$ in other cases. Then 
		 \[ 
		 T'_{2^K} \cong \bigoplus_{i=1}^{c_N(d,k)} \ZZ/2^{\textup{min} \{ 2i , K \}}; 
		 \]
		\item Let  $c_2 (d,k) = e$ when $(d,k)=(2e+1,2l)$ and let $c_2 (d,k) = e-1$ in other cases. Then 
		 \[ 
		 T_{2} \cong \bigoplus_{i=1}^{c_2 (d,k)} \ZZ/2; 
		 \]  
		\item Let  $c_2 (d,k,\textup{odd}) = e-1$ when $(d,k)=(2e,2l+1)$ and let $c_2 (d,k,\textup{odd}) = e$ in other cases. Then
		 \[ 
		 T_{2} (\textup{odd}) \cong \bigoplus_{i=1}^{c_2 (d,k,\textup{odd})} \ZZ/2; 
		 \] 
		\item The symbol $\wrho_{\del}$ denotes the reduced $\rho$-invariant for manifolds with boundary;
		\item The invariant $\bbr$ is an invariant derived from the splitting invariants along $4i$-dimensional submanifolds;
		\item The invariant $\br$ consists of the splitting invariants along $(4i-2)$-dimensional submanifolds;
		\item The invariant $\bbr_{0}$ is an invariant derived from the splitting invariants along $2k$-dimensional submanifolds.
	\end{enumerate}
\end{thm}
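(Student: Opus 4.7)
The plan is to extend the strategy of~\cite{Macko-Wegner(2011)}, which handled $m=0$, by combining the $N=2^{K}$ results of~\cite{BMNR(2018)} with the $N$ odd results of~\cite{Madsen-Rothenberg(1989)} along the splitting $\ZZ/N \cong \ZZ/2^{K} \times \ZZ/M$. The framework is the surgery exact sequence for $L \times D^{m}$ relative to the boundary, combined with the observation that the reduced $\rho$-invariant $\wrho_{\del}$ together with the splitting invariants $\bbr_{0}$, $\bbr$, $\br$ forms a candidate detecting family for $\sS^{s}_{\del}(L \times D^{m})$. The task is then to show that this combined map is an isomorphism onto the stated target.

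The key technical issue, already flagged in the introduction, is that for $m \geq 1$ the normal invariant group $\sN_{\del}(L \times D^{m})$ is infinite, whereas the Macko--Wegner argument requires finiteness. To bypass this, the plan is to invoke Lemmas~\ref{lem:rho-factors-through-finite-2-K}, \ref{lem:rho-factors-through-finite-M} and~\ref{lem:rho-factors-through-finite-N}, which show that the $\rho$-invariant factors through a finite quotient of the normal invariants. On this finite quotient one can run the Macko--Wegner strategy verbatim, and the full kernel of $\rho$ on the structure set is then recovered by combining the kernel on the finite quotient with the obvious kernel of the quotient map. The latter contributes the free summand $F^{\pm}$ and the accompanying $\ZZ$ or $\ZZ/2$ summand, the dichotomy between the two being dictated by the parity of the relevant $L$-group index $2d+2k$ via the classical formulas for $L^{s}_{*}(\ZZ[G])$.

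With the factorization in hand, the argument decomposes according to the $2^{K}$-primary and $M$-primary parts of $G$. Theorem~\ref{thm:K-bar-N} and Corollary~\ref{cor:T-N}, proved by applying the Macko--Wegner technique on the finite quotient, are designed to identify the torsion of the kernel: the $2^{K}$-primary part is read off by comparison with~\cite{BMNR(2018)}, producing $T'_{2^{K}}$, while combining with the odd-primary data of~\cite{Madsen-Rothenberg(1989)} accounts for $T_{2}$. The four parity cases of $(d,k)$ are controlled by the fourfold periodicity of the $L$-groups and by the formula for the $\rho$-invariant given in Theorem~5.5 of~\cite{BMNR(2018)}, which determines whether one lands on $F^{+}$ or $F^{-}$ and whether the extra summand is $\ZZ$ or $\ZZ/2$.

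The odd-boundary case $\sS^{s}_{\del}(L \times D^{2k+1})$ should then follow with less effort, since the $\rho$-invariant and the $\bbr_{0}$, $\bbr$ invariants are expected to vanish for dimensional reasons, leaving only $\br$ to yield $T_{2}(\textup{odd})$. The step I expect to be the main obstacle is the bookkeeping required to verify that the two factorizations of $\rho$ through finite quotients at the $2^{K}$- and $M$-primary levels assemble coherently into a single factorization for $N=2^{K}\cdot M$, and that the resulting kernels split as a direct sum matching the target of the theorem uniformly across all four parity cases of $(d,k)$; this is the step at which the new formula from~\cite{BMNR(2018)} is essential.
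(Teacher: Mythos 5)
Your proposal follows essentially the same route as the paper: the surgery exact sequence for $L\times D^{m}$, the factorization of $[\wrho_{N}(d,k)]$ through a finite quotient via Lemmas~\ref{lem:rho-factors-through-finite-2-K}--\ref{lem:rho-factors-through-finite-N}, the Macko--Wegner splitting into $2^{K}$- and $M$-primary parts on that quotient (Theorem~\ref{thm:K-bar-N}, Corollary~\ref{cor:T-N}), and the reduction of the remaining bookkeeping to~\cite{BMNR(2018)}. The only imprecision is minor: the $\ZZ$ versus $\ZZ/2$ summand detected by $\bbr_{0}$ comes from $L_{2k}(\ZZ)$ (the $S^{2k}$-factor of the normal invariants, hence the parity of $k$), not from the parity of the $L$-group index $2d+2k$, which instead governs only the $F^{+}$ versus $F^{-}$ dichotomy.
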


The definitions of the invariants $\wrho_{\del}$, $\br$, $\bbr_{0}$ and $\bbr$ are taken from~\cite{BMNR(2018)}.

\begin{cor} \label{cor:higehr-str-sets-L-times-S}
	For $k \geq 2$ we have isomorphisms 
	\[	
		(\red{_\del},\br',\br'',\br''') \co \sS^{s} (L \times S^{2k}) \cong \sS^{s}_{\del} (L \times D^{2k}) \oplus T_{2^K} (d) \oplus T_2 (d) \oplus T_M (d) 
	\]
	and for $k \geq 1$ we have isomorphisms 
	\[	
		(\red{_\del},\br',\br'',\br''') \co\sS^{s} (L \times S^{2k+1}) \cong \sS^{s}_{\del} (L \times D^{2k+1}) \oplus T_{2^K} (d) \oplus T_2 (d) \oplus T_M (d)
	\]
	where for $c = \lfloor (d-1)/2 \rfloor$ 
	\[
	T_{2^K} (d) \cong \bigoplus_{i=1}^{\lfloor (d-1)/2 \rfloor} \ZZ/2^{K} \quad \textup{and} \quad T_2 (d) \cong \bigoplus_{i=1}^{\lfloor d/2 \rfloor} \ZZ/2 \quad \textup{and} \quad |T_M (d)| = M^c.
	\]
\end{cor}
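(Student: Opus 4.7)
The strategy is to reduce the computation of $\sS^{s}(L \times S^{m})$ (with $m = 2k$ or $m = 2k+1$) to Theorem~\ref{thm:main-thm} by splitting off a copy of the torsion part of the structure set $\sS^{s}(L)$ of the underlying fake lens space. Geometrically, this splitting is produced by the decomposition $S^{m} = \{p\} \cup D^{m}$: the invariant $\red_{\del}$ is obtained by making a simple homotopy equivalence $f \co N \to L \times S^{m}$ transverse to $L \times \del D^{m}$ for a small disc around $p$ and restricting to $f^{-1}(L \times D^{m})$, which produces an element of $\sS^{s}_{\del}(L \times D^{m})$. The remaining invariants $\br'$, $\br''$, $\br'''$ arise as splitting invariants of $f$ along the chain of fake lens subspaces $L_{j} \times \{p\} \subset L \times \{p\} \subset L \times S^{m}$, organized so as to separate the $2^{K}$-torsion, $2$-torsion, and $M$-torsion contributions.

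The crucial identification is that the three torsion summands $T_{2^{K}}(d)$, $T_{2}(d)$, $T_{M}(d)$ in the target of the corollary coincide with the torsion part of the structure set $\sS^{s}(L)$ of the $(2d-1)$-dimensional fake lens space $L$ with $\pi_{1}(L) = \ZZ/N$, as computed in~\cite{Macko-Wegner(2011)}. Morally, the corollary thus says that $\sS^{s}(L \times S^{m}) \cong \sS^{s}_{\del}(L \times D^{m}) \oplus (\textup{torsion of } \sS^{s}(L))$, with the extra summand inherited via the inclusion $L = L \times \{p\} \hookrightarrow L \times S^{m}$ and the free $\rho$-invariant summand of $\sS^{s}(L)$ absorbed into the $\wrho_{\del}$-summand of $\sS^{s}_{\del}(L \times D^{m})$ produced by the main theorem.

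To prove the isomorphism I would use the surgery exact sequence for $L \times S^{m}$ and compare it with the sequences for $L \times D^{m}$ (rel boundary) and for $L$, exploiting the cofibration $L \times S^{m-1} \hookrightarrow L \times D^{m} \to L \times S^{m}$. On normal invariants this cofibration gives a direct-sum decomposition $[L \times S^{m}, G/\TOP] \cong [L \times D^{m}, G/\TOP; \textup{rel}\; \del] \oplus [L, G/\TOP]$ which is compatible with the $2$-local and odd-local decomposition of $L_{*}(\ZZ[\ZZ/N])$ coming from $N = 2^{K} \cdot M$. Combining these ingredients with Theorem~\ref{thm:main-thm} and the combination scheme of~\cite{Macko-Wegner(2011)} identifies the $\sS^{s}_{\del}(L \times D^{m})$ summand on the one hand, and the remaining torsion summands $T_{2^{K}}(d)$, $T_{2}(d)$, $T_{M}(d)$ on the other.

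The main technical obstacle is the same one that dominates the proof of the main theorem: the normal-invariant group of $L \times S^{m}$ is infinite for $m \geq 1$, so the finite-group Macko-Wegner machinery does not apply verbatim. I would circumvent this using Lemmas~\ref{lem:rho-factors-through-finite-2-K}, \ref{lem:rho-factors-through-finite-M}, and~\ref{lem:rho-factors-through-finite-N}: once the $\rho$-invariant is known to factor through a finite quotient, the kernel computation that governs the structure set reduces to a finite problem, and the Macko-Wegner combination argument for the $2^{K}$- and $M$-parts can be applied directly to assemble the full torsion contribution.
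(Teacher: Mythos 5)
Your overall strategy---splitting the normal invariants via $(L\times S^{m})_{+}\simeq (L_{+}\wedge S^{m})\vee L_{+}$, so that $\sN(L\times S^{m})\cong \sN_{\del}(L\times D^{m})\oplus \sN(L)$, and then comparing surgery exact sequences to reduce to Theorem~\ref{thm:main-thm}---is essentially the paper's, which simply invokes the proof of Corollary~1.2 of~\cite{BMNR(2018)} and notes that the only new feature is the $\widetilde{KO}(-;\ZZ[1/2])$-summand of $\sN(L)$, responsible for $\br'''$ and $T_{M}(d)$.

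However, your ``crucial identification'' is wrong. The extra summand $T_{2^K}(d)\oplus T_{2}(d)\oplus T_{M}(d)$ is not the torsion of the structure set $\sS^{s}(L)$; it is the normal invariant group $\sN(L)\cong [L;\G/\TOP]\cong \bigoplus_{i}H^{4i}(L;\ZZ_{(2)})\oplus\bigoplus_{i}H^{4i-2}(L;\ZZ/2)\oplus\widetilde{KO}^{0}(L;\ZZ[1/2])$, which is what Theorem~3.2 of~\cite{Macko-Wegner(2011)} actually computes. The two groups genuinely differ: the $2$-primary torsion of $\sS^{s}(L)$ consists of summands $\ZZ/2^{\min\{K,2i\}}$ (compare Corollary~\ref{cor:T-N} at $k=0$), whereas $T_{2^K}(d)$ consists of full $\ZZ/2^{K}$ summands, exactly as in $H^{4i}(L;\ZZ_{(2)})$; for $K\geq 3$ and $i=1$ these are not isomorphic. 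The content of the corollary is precisely that the whole of $\sN(L)$, restricted from $L\times\{p\}$, injects into $\sS^{s}(L\times S^{m})$ and splits off---no quotient by an $L$-group image occurs on that summand, as it would if the answer were the torsion of $\sS^{s}(L)$. Two smaller points: (i) your closing paragraph is off target, since the extra summand $\sN(L)$ is finite and the entire infinite-group difficulty is confined to the factor $\sS^{s}_{\del}(L\times D^{m})$, which Theorem~\ref{thm:main-thm} has already handled, so no new appeal to Lemmas~\ref{lem:rho-factors-through-finite-2-K}--\ref{lem:rho-factors-through-finite-N} is needed here; (ii) your transversality description of $\red_{\del}$ only produces a degree one normal map over $L\times D^{m}$, not a simple homotopy equivalence of pairs, so as stated it does not land in $\sS^{s}_{\del}(L\times D^{m})$ without an additional splitting argument---the map is better defined through the comparison of exact sequences itself.
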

The definitions of the map $\red_\del$ and the invariants $\br',\br''$ are taken from~\cite{BMNR(2018)}, the invariant $\br'''$ is addressed in Section~\ref{sec:ses}. Together with Theorem \ref{thm:main-thm} this shows that $\sS^{s} (L \times S^{m})$ is calculated by the invariants $\wrho_{\del},\bbr_{0},\bbr,\br,\br', \br'',\br'''$ where each symbol has to be appropriately interpreted depending on parity of $d$ and $m$.


\section{The surgery exact sequence}
\label{sec:ses}


The basic setup of the paper \cite{BMNR(2018)} fits the general case handled here as well. In particular,  the material of Sections 2 and 3 of \cite{BMNR(2018)} serves as a background and motivation also in our case. Similarly, Section 5 about the $\rho$-invariant is written for the general case. Hence we refer the reader there for these topics as well as for the notation. Modifications are needed in Section 4 and, of course, in Section 6 which contains calculations. Hence, we concentrate on these in this paper. 


As in~\cite{BMNR(2018)} it is enough to deal with the case $L^{2d-1} = L^{2d-1}_{(1,\ldots,1)}$. We recall that we need to study the surgery exact sequence:
\begin{equation} \label{eqn:ses-lens-times-disk}
\sN_\partial (L \times D^{m+1}) \xra{\theta} L^s_{n+1} (\ZZ G) \xra{\partial} \sS_\del^s (L \times D^{m}) \xra{\eta} \sN_\partial (L \times D^{m}) \xra{\theta} L^s_{n} (\ZZ G),
\end{equation}
where $n=\dim (L \times D^{m}) = 2d-1+m$.

For the $L$-groups we have the following theorem where the symbol $R_{\CC} (G)$ denotes the complex representation ring of a group $G$ and the superscripts $\pm$ denote the $\pm$-eigenspaces with respect to the involution given by complex conjugation. The symbol $\Gsign$ means the $G$-signature and Arf is the Arf invariant.

\begin{thm} \label{L(G)} \cite{Hambleton-Taylor(2000)}
	For $G = \ZZ_N$ with $N = 2^K \cdot M$, $M$ odd, we have that
	\begin{align*}
	L^s_n (\ZZ G) & \cong
	\begin{cases}
	4 \cdot R_{\CC}^+ (G) & n \equiv 0 \; (\mod 4) \; (\Gsign, \;
	\mathrm{purely} \; \mathrm{real}) \\
	0 & n \equiv 1 \; (\mod 4) \\
	4 \cdot R_{\CC}^- (G) \oplus \ZZ/2 & n \equiv 2 \; (\mod 4) \;
	(\Gsign, \; \mathrm{purely}
	\; \mathrm{imaginary}, \mathrm{Arf}) \\
	0 \; \textup{or} \; \ZZ/2 & n \equiv 3 \; (\mod 4) \;
	(\mathrm{codimension} \; 1 \; \mathrm{Arf})
	\end{cases}
	\end{align*}
	$\widetilde L^s_{2k} (\ZZ G) \cong 4 \cdot \RhG^{(-1)^k}$ where
	$\RhG^{(-1)^k}$ is $R_{\CC}^{(-1)^k} (G)$ modulo the regular
	representation. In the case $n \equiv 3 \; (\mod 4)$ we have $0$ if
	$K=0$ and $\ZZ/2$ if $K \geq 1$.
\end{thm}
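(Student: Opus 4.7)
The plan is to follow the general scheme for computing $L$-theory of integral group rings of finite groups, as developed by Wall and refined by Hambleton-Taylor-Williams. I would organise the calculation into three layers: the rational (multisignature) part, the $2$-local torsion detected by Arf-type invariants, and the Rothenberg-style correction relating $L^s$ and $L^h$. The input that makes $G = \ZZ/N$ tractable is the explicit decomposition $\QQ G \cong \prod_{d \mid N} \QQ(\zeta_d)$ together with the complete knowledge of $\mathrm{Wh}(\ZZ/N)$.

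First I would treat the even-dimensional cases. The $G$-signature $\Gsign \co L^s_{2k}(\ZZ G) \to R_\CC(G)$ lands in the $(-1)^k$-eigenspace of complex conjugation, i.e.\ in $R_\CC^+(G)$ when $k$ is even and in $R_\CC^-(G)$ when $k$ is odd. To pin down its image I would invoke Dress induction to reduce to hyperelementary (here: cyclic) subgroups, and then use the cyclotomic decomposition above to express the rational $L$-theory as a product of Witt groups of cyclotomic fields; their ranks match the dimensions of $R_\CC^\pm(G)$. The factor of $4$ is Wall's multisignature divisibility theorem: simple unimodular forms over $\ZZ G$ impose congruence conditions on characters modulo $4$. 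Passing from $L^s$ to $\widetilde L^s$ quotients out the contribution of the regular representation, producing $\widetilde L^s_{2k}(\ZZ G) \cong 4 \cdot \RhG^{(-1)^k}$.

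Next I would handle the torsion. After removing the multisignature, the remaining piece in even degrees is concentrated at the prime $2$ and appears only for $n \equiv 2 \pmod 4$, where it is the ordinary Arf invariant contributing $\ZZ/2$. For odd dimensions $n = 2k+1$ the multisignature vanishes; only the codimension-$1$ Arf invariant can survive, and it is nontrivial exactly when $G$ admits a subgroup of index $2$, i.e.\ when $K \geq 1$, giving $0$ when $K = 0$ and $\ZZ/2$ otherwise.

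The main obstacle is to show that the image of the multisignature is \emph{exactly} $4 \cdot R_\CC^\pm(G)$, and that the Rothenberg sequence together with the Tate cohomology of $\mathrm{Wh}(\ZZ/N)$ contributes no unexpected torsion in the stated degrees. Both issues are resolved in \cite{Hambleton-Taylor(2000)} via a careful combination of Dress induction with the explicit structure of $\mathrm{Wh}(\ZZ/N)$ at the prime $2$; once these inputs are in hand, the stated description of $L^s_n(\ZZ G)$ and of $\widetilde L^s_{2k}(\ZZ G)$ falls out directly.
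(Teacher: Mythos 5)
This theorem is not proved in the paper at all: it is quoted verbatim from \cite{Hambleton-Taylor(2000)} (see also \cite[Ch.~13A]{Wall(1999)}), so there is no in-paper argument to compare your proposal against. Your outline is a fair summary of the standard strategy for computing $L^s_*(\ZZ G)$ for $G = \ZZ/N$ --- multisignature in even degrees, divisibility by $4$, Arf-type torsion, Rothenberg comparison of $L^s$ with $L^h$ via $\widehat H^*(\ZZ/2;\mathrm{Wh}(G))$ --- but as you yourself acknowledge, the two genuinely hard points (that the image of $\Gsign$ is \emph{exactly} $4 \cdot R_{\CC}^{\pm}(G)$, and that the Rothenberg sequence contributes no extra torsion) are deferred to \cite{Hambleton-Taylor(2000)}. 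So what you have written is an annotated citation rather than a proof, which is in fact exactly the status the theorem has in the paper. Two small points of caution if you ever expand this: for cyclic groups one does not really need Dress induction (the group is already hyperelementary; the arithmetic-square/excision computation over the cyclotomic factors of $\QQ G$ is the more direct route); and your treatment of the odd-dimensional cases silently merges $n \equiv 1$ and $n \equiv 3 \; (\mod 4)$ --- the codimension-$1$ Arf invariant lives only in the $n \equiv 3$ case, and the vanishing of $L^s_{4k+1}(\ZZ G)$ requires its own (separate) argument rather than following from ``the multisignature vanishes.''
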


For the normal invariants~$\sN_{\del} (Y) \cong [Y/\del Y;\G/\TOP]$, we first notice that for the products $Y = L \times D^{m}$ we have
\begin{equation} \label{eqn:l-times-d-is-wedge-with-smash}
Y/\del Y = L \times D^{m} / L \times S^{m-1} \simeq L_{+} \wedge S^{m} \simeq (L \wedge S^{m}) \vee S^{m}.
\end{equation}
Using localization at $2$ and away from $2$, we have in general the following homotopy pullback square \cite{Madsen-Milgram(1979)}
\begin{equation} \label{eqn:htpy-type-of-g-top}
\begin{split}
\xymatrix{
	\G/\TOP \ar[r] \ar[d] & \prod_{i > 0} K(\ZZ_{(2)},4i) \times K(\ZZ/2,4i-2) \ar[d] \\
	\BO[1/2] \ar[r] & \BO_{\QQ} \simeq \prod_{i > 0} K(\QQ,4i)
}
\end{split}
\end{equation}
which induces a Mayer-Vietoris sequence for the homotopy sets of mapping spaces. 

It is a good idea to combine both of these results in a way that suits a particular purpose. Therefore,  slightly differently from~\cite{BMNR(2018)}, we first use~\eqref{eqn:l-times-d-is-wedge-with-smash} to obtain
\begin{equation} \label{eqn:ni-lens-space-times-disk-v1}
\sN_{\del} (L \times D^{m}) \cong [S^{m},\G/\TOP] \oplus [L \wedge S^{m},\G/\TOP].
\end{equation}
We also have 
\[
[L \wedge S^{m},\G/\TOP] \cong H^{-m} (L,\G/\TOP).
\]
Hence we have
\begin{align} \label{eqn:ni-calculation-via-sing-co-and-ko}
\begin{split}
& \sN_{\del} (L_{N} \times D^{m}) \cong \\ & L_{m} (\ZZ) \oplus_{i} H^{4i-m} (L_{N};\ZZ_{(2)}) \oplus_{i} H^{4i-2-m} (L_{N};\ZZ_2) \oplus \widetilde{KO}^{-m} (L_{N};\ZZ [1/2]). 
\end{split}
\end{align}
We have enough information about all these groups, see 
Theorem 3.2 from \cite{Macko-Wegner(2011)}. It is convenient to distinguish the two cases when $m$ is odd and when $m$ is even.

\

\nin \textbf{Case $m=2k+1$.} 
The Atiyah-Hirzebruch spectral sequence reveals that 
\begin{equation} \label{ni-lens-space-times-odd-disk}
\sN_{\del} (L \times D^{2k+1}) \cong \begin{cases}
\ZZ \oplus \bigoplus_{i \in J_{2}^{N} (d,k,\textup{odd})} \ZZ/2 & \textup{if} \; (d,k)=(2e,2l) \;\textup{or} \; (2e+1,2l+1) \\
\bigoplus_{i \in J_{2}^{N} (d,k,\textup{odd})} \ZZ/2 & \textup{otherwise},
\end{cases}
\end{equation}
where the indexing set $J_{2}^{N} (d,k,\textup{odd})$ is as in (4.3) of~\cite{BMNR(2018)}. Note that when $(d,k)=(2e,2l)$, then $2d-1+2k+1 = 4(e+l)$, and when $(d,k)=(2e+1,2l+1)$, then $2d-1+2k+1=4(e+l+1)$. The $\ZZ/2$ summands are detected by the invariants $\bt_{4i-2}$, until we reach the dimension of $2d-1+2k+1$.

\

\nin \textbf{Case $m=2k$.} From~\eqref{eqn:ni-calculation-via-sing-co-and-ko} we see that this case is a shifted copy of the case $k=0$ plus a summand coming from the sphere $S^{2k}$. We also see that for $N=2^{K} \cdot M$ with $M$ odd the singular cohomology part is the same as for $2^{K}$ and $M$ only influences the $KO$-theory part. From the Atiyah-Hirzebruch spectral sequence one sees that this part has order $M^{c}$ where $c = \lfloor (d-1)/2 \rfloor$. 
Hence we get
\begin{equation} \label{red-ni-lens-spaces}
\sN_{\del} (L_{N} \times D^{m}) \cong L_{m} (\ZZ) \oplus \bigoplus_{i} \ZZ/{2^K}
\oplus \bigoplus_{i} \ZZ/2 \oplus \widetilde{KO}^{-m} (L;\ZZ [1/2])
\end{equation}
where the order of the last summand is $M^c$ with $c = \lfloor (d-1)/2 \rfloor$. As in~\cite{BMNR(2018)} all but the last summand are detected by the splitting invariants $\bt_{4i}$ and $\bt_{4i-1}$, where the index $i$ runs through indexing sets denoted as $J_{4}^{N} (d,k)$, $J_{2}^{N} (d,k)$.

\

Proposition 4.2 from \cite{BMNR(2018)}, which influences both cases of $m$, is valid for general $2 |  N$. If $N=M$ is odd, then $L^s_{2d-1+2k}(\ZZ G) =0$. Therefore we denote here as well
\begin{equation}
\widetilde{\sN}_{\del} (L \times D^{m}) := \ker \theta \co \sN_{\del}(L^{2d-1} \times D^{m}) \ra L^s_{2d-1+m}(\ZZ G)
\end{equation}
and the corresponding indexing sets as $J_{4}^{tN} (d,k)$, $J_{2}^{tN} (d,k)$ and $J_{2}^{tN} (d,k,\textup{odd})$.

\

We can now summarize what we know. Our information is enough to solve the case $m=2k+1$, the other case will take more effort.

\

\nin \textbf{Case $m=2k+1$.} We have the isomorphism
\begin{equation} \label{eqn:end-result-odd-disk}
\sS^{s}_{\del} (L \times D^{2k+1}) \cong \widetilde{\sN}_{\del} (L \times D^{2k+1}) \cong \bigoplus_{J_{2}^{tN} (d,k,\textup{odd})} \ZZ/2.
\end{equation}
This follows from~\eqref{ni-lens-space-times-odd-disk} combined with Proposition 4.2 from \cite{BMNR(2018)}.

\

\nin \textbf{Case $m = 2k$.} We obtain the short exact sequence
\begin{equation} \label{ses-lens-2d-1}
0 \ra \wL^s_{2d+2k} (\ZZ G) \xra{\partial} \sS^{s}_{\del}  (L^{2d-1} \times D^{2k})
\xra{\eta} \widetilde{\sN}_{\del}(L^{2d-1} \times D^{2k}) \ra 0,
\end{equation}
where
\begin{align}
\begin{split} \label{eqn:tilde-N-L-x-D}
n = 4u-1 \; : \; \widetilde{\sN}_{\del} (L^{2d-1} \times D^{2k}) & = \mathrm{ker} \; \big (
\bt_{4u-2} \co {\sN}_{\del} (L^{2d-1} \times D^{2k}) \ra \ZZ/2 \big ), \\
n = 4u+1 \; : \; \widetilde{\sN}_{\del} (L^{2d-1} \times D^{2k}) & = \sN_{\del} (L^{2d-1} \times D^{2k}).
\end{split}	
\end{align}
It will be convenient to use the decomposition
\begin{equation} \label{red-ni-lens-spaces}
\widetilde \sN_{\del} (L^{2d-1} \times D^{2k}) \cong T_{F} (d,k) \oplus T_{2^{K}} (d,k) \oplus T_{2} (d,k) \oplus T_{M} (d,k),
\end{equation}
where 
\[
T_F (d,k) \cong \begin{cases} \ZZ (t_{4l}) & k = 2l \\ 0 & k = 2l+1 \end{cases}
\]
and 
\[ 
T_{2^{K}} (d,k) = \bigoplus_{i \in rJ_{4}^{tN} (d,k)} \ZZ/{2^K} (t_{4i}), \quad  T_2 (d,k) = \bigoplus_{i\in J_{2}^{tN} (d,k)} \ZZ/2 (t_{4i-2})
\]
and 
\[
|T_{M} (d,k)| = M^{c}. 
\]
Here the indexing sets $rJ_{4}^{tN} (d,k)$ and $rJ_{4}^{tN} (d,k)$, $rJ_{2}^{tN} (d,k)$ and $rJ_{2}^{tN} (d,k)$ are as in~\cite{BMNR(2018)} after display (4.11). The cardinality of $rJ_{4}^{tN} (d,k)$ is equal to $c_N (d,k)$ and the cardinality of $rJ_{2}^{tN} (d,k)$ is equal to $c_2 (d,k)$ from the statement of Theorem~\ref{thm:main-thm}.

We will also sometimes use the notation
\begin{equation} \label{eqn:torsion-of-ni-general}
\sT_{N}(d,k) := T_{2^{K}} (d,k) \oplus T_{2} (d,k) \oplus T_{M} (d,k)
\end{equation}
for the torsion part of \eqref{red-ni-lens-spaces}. Note that for $M$ odd $\sT_{M} (d,k) = T_{M} (d,k)$.

The first term in the sequence (\ref{ses-lens-2d-1}) is understood by Theorem \ref{L(G)}, the third
term is understood by (\ref{red-ni-lens-spaces}). Hence we are left with an extension problem, which we solve in Section~\ref{sec:calculations} using techniques from Section~\ref{sec:the-rho-invariant}.


\section{The $\rho$-invariant} \label{sec:the-rho-invariant}


By the preceding section we now understand the surgery exact sequence for $L^{2d-1} \times D^{2k}$ up to an extension problem which we solve by studying the $\rho$-invariant. In Subsections 5.1 to 5.3 of~\cite{BMNR(2018)} its definition and main properties are recalled. These all work in our general case. Hence, just as in~\cite{BMNR(2018)}, we now have all the ingredients we need to analyze the surgery exact sequence for $X = L^{2d-1} \times D^{2k}$. Denoting $n = 2d-1+2k$ we can summarize everything in the commutative ladder:
\[
\xymatrix{
	0 \ar[r] & \wL_{n+1} (\ZZ G) \ar[r] \ar[d]_{G-\textup{sign}} & \sS^{s}_{\del} (L \times D^{2k}) \ar[d]^{\wrho_{\del}} \ar[r] & \wsN_{\del} (L \times D^{2k}) \ar[d]^{[\wrho_{\del}]} \ar[r] & 0 \\
	0 \ar[r] & 4 \cdot \RhG^\pm \ar[r] & \QQ \RhG^{\pm} \ar[r] & \QQ \RhG^{\pm}/4 \cdot \RhG^{\pm} \ar[r] & 0  
} 
\]
By Theorem 5.4 of~\cite{BMNR(2018)} we need to understand the kernel of $[\wrho_\del]$. Since in this paper  we only work with the $\rho$-invariant for manifolds with boundary, but we will be switching between the $\rho$-invariant maps for different groups $G = \ZZ/N$ and different dimensions $(d,k)$, we introduce the following notation.

\begin{notation}
	When $G = \ZZ/N$ we denote the $\rho$-invariant map from Definition 5.3 of~\cite{BMNR(2018)} as
	\[
		\wrho_N (d,k) \co \sS^{s}_\partial (L^{2d-1}_{N} \times D^{2k}) \ra \QQ \RhG^{\pm}
	\]
	and the induced map on the normal invariants as
	\[
		[\wrho_N (d,k)] \co \sN_\partial (L^{2d-1}_{N} \times D^{2k}) \ra \QQ \RhG^{\pm}/4 \cdot \RhG^{\pm}.
	\]
\end{notation}	

\

Hence our task is to calculate $\ker [\wrho_N (d,k)]$. Imitating~\cite{Macko-Wegner(2011)} we will relate it to $\ker [\wrho_{2^K} (d,k)]$ and $\ker [\wrho_{M} (d,k)]$. In order to do that we need to use the transfer maps and their compatibility with the $\rho$-invariant.

Given a natural number $N = U \cdot V$ and $d \geq 1$ let $p_U^N \co L^{2d-1}_{U} \ra L^{2d-1}_{N}$ be the covering\footnote{Here we include the subscript to indicate which lens space is meant.} induced by the inclusion of a subgroup $\ZZ/U \subset \ZZ/N$. These define transfer maps on the normal invariants which are compatible with the $\rho$-invariant so that we have the following commutative diagrams:

\begin{equation}\label{diagram_2^K}
\begin{split}
\xymatrix{
	L_{2k} (\ZZ) \oplus \sT_{N} (d,k) \ar[r]^{\cong} \ar[d] & \widetilde \sN_{\del} (L^{2d-1}_N \times D^{2k}) \ar[d]_{(p_{2^K}^N)^{!}} \ar[r]^-{[\wrho_N (d,k)]} & \QQ R_{\widehat \ZZ_N}^{\pm}/4 \cdot R_{\widehat \ZZ_N}^{\pm} \ar[d] \\
	L_{2k} (\ZZ) \oplus \sT_{2^K} (d,k) \ar[r]^{\cong} & \widetilde \sN_{\del} (L^{2d-1}_{2^K} \times D^{2k})
	\ar[r]^-{[\wrho_{2^K} (d,k)]} & \QQ R_{\widehat \ZZ_{2^K}}^{\pm}/4 \cdot R_{\widehat \ZZ_{2^K}}^{\pm} }
\end{split}
\end{equation}
and
\begin{equation}\label{diagram_M}
\begin{split}
\xymatrix{
	L_{2k} (\ZZ) \oplus \sT_{N} (d,k) \ar[r]^{\cong} \ar[d] & \widetilde \sN_{\del} (L^{2d-1}_N \times D^{2k}) \ar[d]_{(p_M^N)^{!}} \ar[r]^-{[\wrho_N (d,k)]} & \QQ R_{\widehat \ZZ_N}^{\pm}/4 \cdot R_{\widehat \ZZ_N}^{\pm} \ar[d] \\
	L_{2k} (\ZZ) \oplus \sT_M (d,k) \ar[r]^{\cong} & \widetilde \sN_{\del} (L^{2d-1}_M \times D^{2k}) \ar[r]^-{[\wrho_M (d,k)]} & \QQ
	R_{\widehat \ZZ_M}^{\pm}/4 \cdot R_{\widehat \ZZ_M}^{\pm}. }
\end{split}
\end{equation}

In addition, given $N \geq 1$ there is the $S^1$-bundle $p_N^{S^1} \co L^{2d-1}_{N} \ra \CC P^{d-1}$ which also induces a transfer map on the normal invariants compatible with the $\rho$-invariant. 

The key property of the $\rho$-invariant is Theorem 5.5 in~\cite{BMNR(2018)} which we reproduce here for convenience. The indexing set $I_{4}^{S} (d,k)$ in the statement is defined in Section~3 of~\cite{BMNR(2018)}. 

\begin{thm} \label{thm:rho-formula-cp-times-disk}
	Let $h \co Q \ra \CC P^{d-1} \times D^{2k}$ represent an element in the higher structure set $\sS_{\del} (\CC P^{d-1} \times D^{2k})$. Then for $1 \neq t \in S^1$ we have
	\[
	\wrho_{S^1,\del} (d,k)(t,[h]) = \sum_{i \in I_{4}^{S} (d,k)} 8 \cdot \bs_{4i} (\eta ([h])) \cdot (f^{d+k-2i} - f^{d+k-2i-2}) \in \CC 
	\]
	where $f = (1+t)/(1-t)$. 
\end{thm}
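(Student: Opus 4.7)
The plan is to verify the formula by reducing to a set of generators of $\sS_{\del} (\CC P^{d-1} \times D^{2k})$ detected by the splitting invariants and computing the $\rho$-invariant directly on each generator via the $G$-signature theorem.

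First, both sides are homomorphisms $\sS_{\del} (\CC P^{d-1} \times D^{2k}) \ra \CC$ that descend along $\eta$ to homomorphisms on the normal invariants. For the left side this is the content of Theorem~5.4 of~\cite{BMNR(2018)}, using that the image of the $L$-group contribution lies in the lattice $4 \cdot R_{\widehat{S^1}}^{\pm}$; for the right side it is manifest from the additivity of the splitting invariants. Since $\wrho_{S^1,\del}$ takes values in the torsion-free complex vector space $\QQ R^{\pm}_{\widehat{S^1}}$, only the free part of $\widetilde{\sN}_{\del} (\CC P^{d-1} \times D^{2k})$ contributes, and by the Atiyah--Hirzebruch analysis underlying~\eqref{eqn:ni-calculation-via-sing-co-and-ko} this free part is precisely $\bigoplus_{i \in I_{4}^{S} (d,k)} \ZZ$, detected by the splitting invariants $\bs_{4i}$. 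It therefore suffices to check the formula on a generator $[h_i]$ of each $\ZZ$-summand, characterized by $\bs_{4j}(\eta([h_i])) = \delta_{ij}$.

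To exhibit such a generator, I would use the Milnor $E_8$-plumbing construction: take the $(4i)$-dimensional Milnor manifold $V^{4i}$ of signature $8$ with boundary a homotopy sphere, cross with $D^{2k}$, and glue it into a $(4i+2k)$-disk inside a tubular neighborhood of $\CC P^{2i} \times \{0\} \subset \CC P^{d-1} \times D^{2k}$ via the normal-bundle framing. By the defining property of the splitting invariants, the resulting degree-one normal map has all splitting invariants zero except $\bs_{4i}$, which takes value $1$. For this explicit $[h_i]$, the $S^1$-equivariant $\rho$-invariant is computed by pulling back along the Hopf bundle $S^{2d-1} \ra \CC P^{d-1}$ to obtain an $S^1$-manifold and applying the Atiyah--Singer $G$-signature theorem. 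Localization at the $S^1$-fixed set (essentially the lift of $\CC P^{2i} \times D^{2k}$) yields the product of the non-equivariant signature $8$ of the plumbing with the integral of an equivariant characteristic class over $\CC P^{2i}$, and this integral simplifies via the standard Atiyah--Bott character formula for $\CC P^{2i}$ to exactly $f^{d+k-2i} - f^{d+k-2i-2}$, where $f = (1+t)/(1-t)$ is the standard substitution for the $L$-operator on a complex line representation.

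The main obstacle is this last step: the explicit evaluation of the equivariant characteristic-class integral. One must carefully track the $S^1$-action on the normal bundle of the fixed submanifold inside the plumbed space, form the Atiyah--Singer integrand involving the universal $L$-class and the character factors of the form $(1+te^{ix})/(1-te^{ix})$, and verify that integration over $\CC P^{2i}$ produces the telescoping pattern collapsing to the difference $f^{d+k-2i} - f^{d+k-2i-2}$. Once the formula is checked on these generators, additivity and the reductions above extend it to all of $\sS_{\del} (\CC P^{d-1} \times D^{2k})$.
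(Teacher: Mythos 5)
The paper itself does not prove this statement: it is reproduced verbatim from Theorem~5.5 of~\cite{BMNR(2018)} ``for convenience,'' so there is no internal proof to compare against, and your attempt has to be judged on its own. Your overall template is the classically correct one (it is essentially how Wall proves the $k=0$ ancestor, Theorem~14C.4 of~\cite{Wall(1999)}): both sides are homomorphisms into the torsion-free, divisible group $\CC$, $\eta$ is injective here because $L_{2d-1+2k}(\ZZ)=0$, so it suffices to verify the formula on elements spanning a finite-index subgroup of the free part, and the values there come from the Atiyah--Singer $G$-signature theorem.

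However, three genuine gaps remain. First, the element of $\sN_{\del}(\CC P^{d-1}\times D^{2k})$ with $\bs_{4j}=\delta_{ij}$ is produced by your plumbing construction as a degree-one normal map, not as a homotopy equivalence; it lies in $\eta(\sS_{\del})$ only if its surgery obstruction in $L_{2d-2+2k}(\ZZ)$ vanishes, which fails for some $i$ when $2d-2+2k\equiv 0\ (\mathrm{mod}\ 4)$. You must either restrict to the finite-index subgroup $\ker\theta$ (which still suffices because $\CC$ is divisible) or say explicitly that you are computing at the level of normal invariants. Second, your localization step misidentifies where the fixed points are: the $S^1$-action on the pullback of the Hopf bundle over $Q$ is \emph{free}, so there is no fixed set to localize at on $Q$ itself; fixed points appear only on an auxiliary bounding $S^1$-manifold (e.g.\ the associated disk bundle, whose fixed set is the entire zero section, not just a lift of $\CC P^{2i}\times D^{2k}$), and moreover $Q$ has nonempty boundary, so one needs the rel-boundary version of the $\rho$-invariant and of the $G$-signature theorem, which you do not address. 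Third, and most importantly, the only step that can actually produce the factor $8$ and the exponents $d+k-2i$ and $d+k-2i-2$ --- the evaluation of the equivariant characteristic-class integral --- is asserted rather than carried out; as written, the proposal is a plausible plan whose decisive computation is still missing.
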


Also recall that we showed that the composition
\begin{equation} \label{eqn:fibering-ni-lens-x-disk-by-cp-x-disk}
	\sS_{\del} (\CC P^{d-1} \times D^{2k}) \xra{\eta} \sN_{\del} (\CC P^{d-1} \times D^{2k}) \xra{\textup{proj} \circ (p_{G}^{S^{1}})^{!}} \widetilde{\sN}_{\del} (L^{2d-1} \times D^{2k})
\end{equation}
is surjective for $n-1=2d-2+2k=4u+2$, which can be phrased as saying that any representative of any element in $\sS^{s}_{\del} (L^{2d-1} \times D^{2k})$ is normally cobordant to a representative of possibly another element of the same group which fibers over a fake $\CC P^{d-1} \times D^{2k}$. In case $n-1=2d-2+2k=4u$ this map is close to be surjective, which can be phrased by saying that in case $n-1=2d-2+2k=4u$ the suspension of any element of $\sS^{s}_{\del} (L^{2d-1} \times D^{2k})$ is normally cobordant to a representative of an element of $\sS^{s}_{\del} (L^{2d+1} \times D^{2k})$ which fibers over a fake $\CC P^{d} \times D^{2k}$. Compare to \cite[Lemma 14E.9]{Wall(1999)}

In~\cite{BMNR(2018)} a consequence of these ideas was a formula for the $\rho$-invariant map $[\wrho_{2^{K}} (d,k)]$ in Proposition 5.7. Here we will not need the exact formula, but we will use another consequence, which is formulated as Proposition~\ref{prop:rho-N-d-k}.

To start, we need some notation which is at first unrelated to the above remarks. Let $\ZZ (d,k)$ be the free abelian group defined as follows:
\begin{equation} \label{eqn:Z-d-k-2-mod-4}
\ZZ (d,k) := 
\begin{cases} 
	\oplus_{i \in I_{4}^{S} (d,k)} \ZZ(s_{4i}) & \textup{if} \; n-1=2d-2+2k=4u+2 \\ 
	\oplus_{i \in I_{4}^{S} (d+2,k)} \ZZ(s_{4i}) & \textup{if} \; n-1=2d-2+2k=4u 
\end{cases}
\end{equation}

Define maps
\begin{equation}
\rho_N (d,k) \co \ZZ (d,k) \ra \QQ \RhG^{(-1)^{d+k}} 
\end{equation}
by
\begin{equation} 
	\label{eqn:formula-for-rho-Z-d-k-even}
	\rho_{N} (d,k) ((s_{4i})_{i}) = \sum_{i \in I^{S}_{4} (d,k)} \!\! 8 \cdot s_{4i} \cdot (f^{d+k-2i} - f^{d+k-2i-2}) \in \QQ\RhGp, 
\end{equation}
when  $n-1=2d-2+2k=4u+2$ and by
\begin{equation} 	
	\label{eqn:formula-for-rho-Z-d-k-odd}
	\rho_{N} (d,k) ((s_{4i})_{i}) = \sum_{i \in rI^{S}_{4} (d+2,k)} \!\! 8 \cdot s_{4i} \cdot (f^{d+k-2i} - f^{d+k-2i-2}) + 8 \cdot s_{4u} \cdot f \in \QQ\RhGm
\end{equation}
when $n-1=2d-2+2k=4u$ with $rI^{S}_{4} (d+2,k) = I^{S}_{4} (d+2,k) \smallsetminus \{ u \}$.

Correspondingly, let
\begin{equation} \label{eqn:brackets-rho-N-d-k}
[\rho_N (d,k)] \co \ZZ (d,k) \xra{\rho_{N} (d,k)} \QQ \RhG^{(-1)^{d+k}} \ra \QQ \RhG^{(-1)^{d+k}} / 4 \cdot \RhG^{(-1)^{d+k}}
\end{equation}

Also denote by $\proj_{N} (d,k)$ the map $\ZZ (d,k) \ra \widetilde{\sN}_{\del} (L^{2d-1}_{N} \times D^{2k})$ obtained by including $\ZZ (d,k)$ into $\sS_{\del} (\CC P^{a-1} \times D^{2k})$ for $a=d$ when $n-1=2d-2+2k=4u+2$ and $a=d+2$ when $n-1=2d-2+2k=4u$ and then applying the transfer map. The symbol $\red_{N}^{S^{1}}$ below denotes the appropriately restricted reduction map $R_{\widehat{S}^{1}} \ra R_{\widehat{\ZZ/N}}$ induced by the inclusion $\ZZ/N \subset S^1$.

\begin{prop} \label{prop:rho-N-d-k}
We have
\[
	\red_{N}^{S^{1}} \circ \rho_{S^{1}} (d,k) = [\rho_N (d,k)] = [\wrho_N (d,k)] \circ \proj_{N} (d,k).
\]
\end{prop}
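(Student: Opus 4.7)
The plan is to prove the two displayed equalities separately, using Theorem~\ref{thm:rho-formula-cp-times-disk} and the naturality of the $\rho$-invariant under the $S^1$-transfer $(p_N^{S^1})^!$ as the main inputs.

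\emph{Left equality.} I would verify $\red_N^{S^1} \circ \rho_{S^1}(d,k) = [\rho_N(d,k)]$ by direct comparison of the defining formulas \eqref{eqn:formula-for-rho-Z-d-k-even} and \eqref{eqn:formula-for-rho-Z-d-k-odd}. In the case $n-1=4u+2$ the two sides agree on the nose before passing to the quotient. In the case $n-1=4u$ we have $d+k-2u=1$, so the only discrepancy occurs in the $i=u$ summand: $\red_N^{S^1} \circ \rho_{S^1}(d,k)$ contributes $8 s_{4u}(f^3 - f)$, whereas $\rho_N(d,k)$ replaces this by $8 s_{4u}\cdot f$. Since $\bar t = t^{-1}$ on $\ZZ/N \subset S^1$ gives $\bar f = -f$, both $f$ and $f^3 - 2f$ lie in $\RhG^-$, so the difference $8 s_{4u}(f^3 - 2f) = 4 \cdot 2 s_{4u}(f^3 - 2f)$ belongs to $4\RhG^-$, and the two maps agree in the quotient.

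\emph{Right equality.} For $[\rho_N(d,k)] = [\wrho_N(d,k)] \circ \proj_N(d,k)$, the key input is that the $\rho$-invariant is compatible with the transfer: for $[h] \in \sS_\del(\CC P^{a-1} \times D^{2k})$ with transfer $[\tilde h] \in \sS_\del(L_N^{2a-1} \times D^{2k})$ one has $\wrho_N(a,k)([\tilde h]) = \red_N^{S^1}\, \wrho_{S^1}(a,k)([h])$. Combined with Theorem~\ref{thm:rho-formula-cp-times-disk}, this identifies $[\wrho_N(a,k)] \circ (p_N^{S^1})^! \circ \eta$ on $\ZZ(d,k)$ with $\red_N^{S^1} \circ \rho_{S^1}(d,k)$. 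When $n-1=4u+2$ (so $a=d$) this is already $[\wrho_N(d,k)] \circ \proj_N(d,k)$, and the first equality closes the argument.

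When $n-1=4u$ (so $a=d+2$), the definition of $\proj_N(d,k)$ additionally incorporates the restriction along $L_N^{2d-1} \hookrightarrow L_N^{2d+3}$, and one must analyze how the top splitting invariant $\bs_{4u}$ behaves under this inclusion. Proceeding exactly as in the proof of Proposition~5.7 of~\cite{BMNR(2018)}, one sees that lower splitting invariants $\bs_{4i}$ with $i < u$ are preserved, while the top contribution is transformed from $8 s_{4u}(f^3 - f)$ into $8 s_{4u}\cdot f$, which is precisely the modification built into the definition of $\rho_N(d,k)$ in~\eqref{eqn:formula-for-rho-Z-d-k-odd}. I expect this last step to be the main obstacle: the geometric analysis of the top splitting invariant under restriction to a smaller lens space is what forces the apparently ad hoc substitution in~\eqref{eqn:formula-for-rho-Z-d-k-odd}, and identifying it cleanly with the desired algebraic formula requires care.
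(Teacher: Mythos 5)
Your proposal is correct and follows essentially the same route as the paper: the paper's proof simply asserts the commutativity of the two squares of the diagram relating $\wrho_{S^1}(d,k)$, $\rho_N(d,k)$ and $[\wrho_N(d,k)]$, citing Theorem~\ref{thm:rho-formula-cp-times-disk}, the defining formulas \eqref{eqn:formula-for-rho-Z-d-k-even}--\eqref{eqn:formula-for-rho-Z-d-k-odd}, naturality of the $\rho$-invariant under the transfer, and the fibering/suspension property around \eqref{eqn:fibering-ni-lens-x-disk-by-cp-x-disk} --- exactly the inputs you invoke. Your extra discussion of the case $n-1\equiv 0 \pmod 4$ (the behaviour of $\bs_{4u}$ under the restriction $L^{2d-1}\subset L^{2d+3}$, handled via Proposition~5.7 of the prequel) supplies detail that the paper's two-sentence proof leaves implicit.
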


\begin{proof}
	This follows from the formulas for the various $\rho$-maps, Theorem~\ref{thm:rho-formula-cp-times-disk}, displays~\eqref{eqn:formula-for-rho-Z-d-k-even} and~\eqref{eqn:formula-for-rho-Z-d-k-odd}, and the fact that the $\rho$-invariant is natural together with the property described in the paragraph around  equation~\eqref{eqn:fibering-ni-lens-x-disk-by-cp-x-disk}. It can be phrased as saying that the following diagram	
	\[
	\xymatrix{
	\sS_{\del} (\CC P^{a-1} \times D^{2k})	\ar[d]_{\wrho_{S^1} (d,k)} & \ZZ (d,k) \ar[l] \ar[rr]^-{\proj_{N} (d,k)}  \ar[d]_{\rho_N (d,k)} & & \widetilde{\sN}_{\del} (L^{2d-1} \times D^{2k}) \ar[d]^{[\wrho_N (d,k)]} \\
	\QQ R_{\widehat{S}^{1}}^{\pm} \ar[r]_{\red_{N}^{S^{1}}} & \QQ R_{\widehat{\ZZ/N}}^{\pm} \ar[rr] & & \QQ R_{\widehat{\ZZ/N}}^{\pm} / 4 \cdot R_{\widehat{\ZZ/N}}^{\pm}
	}
	\]
	is commutative.
\end{proof}

The meaning of the equation in Proposition~\ref{prop:rho-N-d-k} is that the map $[\wrho_N]$ in whose kernel we are interested is described by the above formulas when precomposed with the map $\proj_{N} (d,k)$ which surjects onto $\sT_{2^K} (d,k) \oplus \sT_{M} (d,k)$.


\section{Calculations} \label{sec:calculations}


As indicated in the introduction, it is convenient to observe that the maps $[\wrho_{N} (d,k)]$ factor through finite groups. In the case $k=2l+1$ the source~\eqref{red-ni-lens-spaces} already is a finite group so there is nothing to do. In the case $k=2l$ we need a proof, which we divide into three cases, $N=2^{K}$, $N=M$ odd, and $N=2^{K} \cdot M$. In all the cases the key technical idea is to look at the corresponding situation in the case when $k=0$ and $d$ increases by $2$, that means we will be looking at calculations of the composition 
\[
	\sS (\CC P^{d+1}) \xra{\textup{proj} \circ (p_{G}^{S^{1}})^{!} \circ \eta} \widetilde{\sN} (L^{2d+3}) \xra{[\wrho_{N} (d+2,0)]} \QQ R_{\widehat{\ZZ/N}}^{(-1)^d}/4 \cdot R_{\widehat{\ZZ/N}}^{(-1)^d}.
\]
The source $\sS (\CC P^{d+1})$ is calculated in (3-8) of~\cite{Macko-Wegner(2009)} to be a direct sum of severeal copies of $\ZZ$ and several copies of $\ZZ/2$ and the middle term is calculated in the general case in Theorem 3.2 of~\cite{Macko-Wegner(2011)} to be a finite group where we completely understand the $2$-primary torsion and we know the order of the odd-primary torsion.

\begin{lem} \label{lem:rho-factors-through-finite-2-K}
	Let $N = 2^K$. If $k=2l$ the map $[\wrho_{2^K} (d,k)]$ factors as
	\[
		[\wrho_{2^K} (d,k)] \co \ZZ \oplus \sT_{2^K} (d,k) \xra{\proj} \ZZ/{2^K} \oplus \sT_{2^K} (d,k) \xra{[\bar \rho_N (d,k)]} \QQ R_{\widehat{\ZZ/2^K}}^{\pm}/4 \cdot R_{\widehat{\ZZ/2^K}}^{\pm}.		
	\]
\end{lem}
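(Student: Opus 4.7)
The plan is to split the source $\ZZ \oplus \sT_{2^K}(d,k)$ and handle the two summands separately. Since $\sT_{2^K}(d,k) = T_{2^K}(d,k) \oplus T_2(d,k)$ is already a finite group, the restriction of $[\wrho_{2^K}(d,k)]$ to this subgroup automatically factors through itself, so nothing is required on that summand. The real content is to show that the restriction of $[\wrho_{2^K}(d,k)]$ to the free summand $T_F(d,k) = \ZZ\langle t_{4l}\rangle$ has image annihilated by $2^K$ in $\QQ R_{\widehat{\ZZ/2^K}}^{\pm}/4\cdot R_{\widehat{\ZZ/2^K}}^{\pm}$; the two statements combined give the asserted factorization.

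To access this restriction I would first invoke the (near-)surjectivity of the composition~\eqref{eqn:fibering-ni-lens-x-disk-by-cp-x-disk} recalled after Theorem~\ref{thm:rho-formula-cp-times-disk}: when $n-1 \equiv 2 \pmod 4$ the generator $t_{4l}$ is already in the image, and when $n-1 \equiv 0 \pmod 4$ it becomes so after a single suspension $L^{2d-1}\to L^{2d+1}$. Choose a lift $[h]\in \sS_{\del}(\CC P^{a-1}\times D^{2k})$ of $t_{4l}$ (with $a=d$ or $d+2$ according to parity). Naturality of the $\rho$-invariant under the transfer $(p_{G}^{S^1})^{!}$, combined with Proposition~\ref{prop:rho-N-d-k}, then gives
\[
\wrho_{2^K}(d,k)(t_{4l}) \equiv \red_{2^K}^{S^1}\bigl(\wrho_{S^1,\del}(d,k)([h])\bigr) \pmod{4\cdot R_{\widehat{\ZZ/2^K}}^{\pm}},
\]
and Theorem~\ref{thm:rho-formula-cp-times-disk} evaluates the right-hand side as a finite sum $\sum_{i\in I_{4}^{S}(d,k)} 8\cdot \bs_{4i}(\eta([h]))\cdot (f^{d+k-2i}-f^{d+k-2i-2})$ with $f=(1+t)/(1-t)$.

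It then remains to verify that $2^K$ times this explicit expression lies in $4\cdot R_{\widehat{\ZZ/2^K}}^{\pm}$ after restriction under $\red_{2^K}^{S^1}$. Using the identity $f^j-f^{j-2} = 4t(1+t)^{j-2}(1-t)^{-j}$, the problem reduces to controlling the $2$-adic denominators of $(1-\zeta)^{-j}$ at $2^K$-th roots of unity $\zeta$, combined with the available leading factor $2^{K+5}$ coming from $2^K\cdot 8\cdot 4$. Here one exploits that the lift $[h]$ of the particular generator $t_{4l}$ of $T_F(d,k)$ has its splitting invariants $\bs_{4i}(\eta([h]))$ supported only in positions compatible with $t_{4l}$, so that the relevant exponents $j=d+k-2i$ and $j=d+k-2i-2$ satisfy the divisibility estimates needed to absorb the cyclotomic denominators into the factor $2^K$.

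The main obstacle is precisely this final cyclotomic bookkeeping: matching the $2$-adic valuations of $(1-\zeta)^{-j}$ at primitive $2^r$-th roots of unity (which scale like $-j/2^{r-1}$) against the single factor $2^K$ supplied by the lemma, and using the specific splitting-invariant support of the chosen lift of $t_{4l}$ rather than an arbitrary element of $\sS_{\del}(\CC P^{a-1}\times D^{2k})$. Once this matching is carried out carefully, the factorization of $[\wrho_{2^K}(d,k)]$ through $\ZZ/2^K\oplus \sT_{2^K}(d,k)$ follows at once.
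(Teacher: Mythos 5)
Your reduction of the lemma to the single claim that $2^K\cdot[\wrho_{2^K}(d,k)](t_{4l})=0$ is reasonable, and your use of a lift to $\sS_{\del}(\CC P^{a-1}\times D^{2k})$ together with Theorem~\ref{thm:rho-formula-cp-times-disk} and Proposition~\ref{prop:rho-N-d-k} matches the paper's setup. The gap is that you never prove the key divisibility: you defer it to a ``cyclotomic bookkeeping'' of the $2$-adic valuations of $(1-\zeta)^{-j}$, explicitly flag it as the main obstacle, and do not carry it out. That computation is the entire content of the lemma. Moreover, the way you set it up is shaky on two counts. First, you assume a lift $[h]$ of $t_{4l}$ can be chosen with splitting invariants $\bs_{4i}(\eta([h]))$ ``supported only in positions compatible with $t_{4l}$''; nothing guarantees that a preimage under the (near-)surjection~\eqref{eqn:fibering-ni-lens-x-disk-by-cp-x-disk} has controlled splitting invariants. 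Second, a valuation count on individual character values $(1-\zeta)^{-j}$ against the factor $2^{K+5}$ does not by itself decide membership in $4\cdot R_{\widehat{\ZZ/2^K}}^{\pm}$, which is the condition of being four times a virtual character, not merely a bound on denominators.

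The paper's proof avoids all of this arithmetic. By Proposition~\ref{prop:rho-N-d-k} the composite $[\wrho_{2^K}(d,k)]\circ\proj_{2^K}(d,k)$ is given by the formulas~\eqref{eqn:formula-for-rho-Z-d-k-even} and~\eqref{eqn:formula-for-rho-Z-d-k-odd}, and under the index bijection $I_4^S(d+2,0)\to I_4^S(d,k)$, $i\mapsto i+(l-1)$, these are literally the same formulas as for the $k=0$ composition $\sS(\CC P^{d+1})\to\widetilde{\sN}(L^{2d+3})\to \QQ R_{\widehat{\ZZ/2^K}}^{\pm}/4\cdot R_{\widehat{\ZZ/2^K}}^{\pm}$ from Theorem 4.12 of~\cite{Macko-Wegner(2011)}. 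Since that composition factors through the finite group $\widetilde{\sN}(L^{2d+3})$, whose exponent divides $2^K$, every element of the image is killed by $2^K$, and the factorization follows at once. If you want to salvage your direct approach you would have to actually establish $2^{K}\cdot 8\,(f^{j}-f^{j-2})\in 4\cdot R_{\widehat{\ZZ/2^K}}^{\pm}$ for each relevant exponent $j$, which amounts to redoing the arithmetic of~\cite{Macko-Wegner(2009)} and~\cite{Wall(1999)} rather than quoting the structural fact that makes it unnecessary.
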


\begin{proof}
	Consider the equation from Proposition~\ref{prop:rho-N-d-k}. 
	The right hand part tells us that if we precompose the map $[\wrho_{2^K}]$ with a projection from $\ZZ (d,k)$ we obtain a map given by the  formula~\eqref{eqn:formula-for-rho-Z-d-k-even} or~\eqref{eqn:formula-for-rho-Z-d-k-odd}. 
	
	Next we notice that upon suitable identification of a subgroup of $\sS (\CC P^{d+1})$ with a direct sum of several copies of $\ZZ$ this formula is identical with the formula for the composition
	\begin{equation} \label{eqn:rho-map-k-zero}
	\sS (\CC P^{d+1}) \xra{\textup{proj} \circ (p_{G}^{S^{1}})^{!} \circ \eta} \widetilde{\sN} (L^{2d+3}) \xra{[\wrho_{2^K} (d+2,0)]} \QQ R_{\widehat{\ZZ/2^K}}^{(-1)^d}/4 \cdot R_{\widehat{\ZZ/2^K}}^{(-1)^d}
	\end{equation}
	from Theorem 4.12 in~\cite{Macko-Wegner(2011)} (see also Theorem 14C.4 in~\cite{Wall(1999)}).

	To see this note that the indexing set for~\eqref{eqn:rho-map-k-zero} is $I_{4}^{S}(d+2,0) = \{ 1, \ldots, e \}$. The appropriate bijection $I_{4}^{S} (d+2,0) \ra I_{4}^{S} (d,k)$ is given by $i \mapsto i+(l-1)$.	
	
	However, in the composition~\eqref{eqn:rho-map-k-zero} every element in the middle term has order which divides $2^K$. Hence the same is true for every element in the image of the composition. But then it also means that every element in the image of $[\wrho_{2^K} (d,k)]$ is of such an order and consequently we have a factorization as claimed.
\end{proof}

\begin{lem} \label{lem:rho-factors-through-finite-M}
	Let $N=M$ odd. If $k=2l$ the map $[\wrho_M (d,k)]$ factors as
	\[
	[\wrho_M (d,k)] \co \ZZ \oplus \sT_{M} (d,k) \xra{\proj} \sT_{M} (d+2,0) \xra{[\bar \rho_M (d,k)]} \QQ R_{\widehat{\ZZ/M}}^{\pm}/4 \cdot R_{\widehat{\ZZ/M}}^{\pm},		
	\]
	where $\sT_M (d+2,0)$ is isomorphic to the image of $[\wrho_{M} (d,k)]$ which is a finite group of order $M^{c+1}$ with $c = \lfloor (d-1)/2 \rfloor$.
\end{lem}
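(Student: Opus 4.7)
The plan is to mirror the proof of Lemma~\ref{lem:rho-factors-through-finite-2-K}, replacing the $k=0$ calculations for $N=2^K$ by the corresponding ones for $N=M$ odd. First I would invoke Proposition~\ref{prop:rho-N-d-k} in the form $[\wrho_M(d,k)] \circ \proj_M(d,k) = [\rho_M(d,k)]$, where the right-hand side is given by the explicit formula~\eqref{eqn:formula-for-rho-Z-d-k-even} or~\eqref{eqn:formula-for-rho-Z-d-k-odd}. The bijection $I_4^S(d,k) \to I_4^S(d+2,0)$ sending $i \mapsto i+(l-1)$ (with $k=2l$) then shows that this formula agrees summand-by-summand with the formula for the closed-manifold composition
\[
\sS(\CC P^{d+1}) \xra{\proj \circ (p_M^{S^1})^{!} \circ \eta} \widetilde{\sN}(L_M^{2d+3}) \xra{[\wrho_M(d+2,0)]} \QQ R_{\widehat{\ZZ/M}}^{(-1)^d}/4 \cdot R_{\widehat{\ZZ/M}}^{(-1)^d}.
\]

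The decisive input for $N=M$ odd is that in the $k=0$ case there is neither an $L_{2k}(\ZZ)$ summand nor any $2$-primary torsion, so by \cite[Theorem 3.2]{Macko-Wegner(2011)} the middle term is already $\widetilde{\sN}(L_M^{2d+3}) = \sT_M(d+2,0)$, a finite group of order $M^{c+1}$ with $c=\lfloor(d-1)/2\rfloor$, and the classical calculation in \cite[14E]{Wall(1999)} shows $[\wrho_M(d+2,0)]$ is injective on it. Combined with the matched formulas this means that the image of $[\wrho_M(d,k)] \circ \proj_M(d,k)$ lies in a subgroup of the target canonically isomorphic to $\sT_M(d+2,0)$, and since $\proj_M(d,k)$ surjects onto $\sT_M(d,k) \subset \widetilde{\sN}_{\del}(L_M^{2d-1}\times D^{2k})$, the image accounts for all of $\sT_M(d+2,0)$.

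The only remaining point is the summand $L_{2k}(\ZZ)\cong \ZZ$ coming from the sphere factor. I would argue that its $[\wrho_M(d,k)]$-image lands in the same finite subgroup $\sT_M(d+2,0)$ rather than in some infinite-order direction of $\QQ R_{\widehat{\ZZ/M}}^{\pm}/4\cdot R_{\widehat{\ZZ/M}}^{\pm}$: the contribution is a multiple of the ordinary signature, which in the $G$-signature formula is a multiple of the trivial representation, and for $M$ odd the trivial representation is equivalent to $-\sum_{j=1}^{(M-1)/2}(\chi_j+\chi_{-j})$ modulo the regular representation, hence $M$-torsion in $\RhG^{\pm}$. Naturality of the $\rho$-invariant under the collapse $(L\times D^{2k})/\partial \to S^{2k}$ then identifies this image inside $\sT_M(d+2,0)$.

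The main obstacle is precisely this last verification: confirming that the sphere-factor signature contribution does not escape the finite subgroup identified from the $\sT_M(d,k)$-side. Once this is in place the factorization, together with the identification of the image as $\sT_M(d+2,0)$ of order $M^{c+1}$, is immediate.
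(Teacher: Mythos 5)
Your core argument is exactly the paper's: use Proposition~\ref{prop:rho-N-d-k} to identify $[\wrho_M(d,k)]\circ\proj_M(d,k)$, via the shift of indexing sets, with the closed-manifold composition \eqref{eqn:rho-map-k-zero-M}; then combine the surjectivity of the first map of that composition with the injectivity of $[\wrho_M(d+2,0)]$ from \cite[14E]{Wall(1999)} and the cardinality $M^{c+1}$ of $\widetilde{\sN}(L^{2d+3}_M)$ to conclude that the image is $\sT_M(d+2,0)$. The paper finishes by simply remarking that $[\wrho_M(d,k)]$, like any map, factors through its image, and that by the identification of Proposition~\ref{prop:rho-N-d-k} this image coincides with the image of \eqref{eqn:rho-map-k-zero-M}. (Minor slip: the bijection $i\mapsto i+(l-1)$ goes from $I_4^S(d+2,0)$ to $I_4^S(d,k)$, not the other way around.)

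Where your proposal goes wrong is the separate treatment of the $L_{2k}(\ZZ)$ summand, which you yourself flag as the unresolved obstacle. The representation-theoretic argument you offer does not work: $\RhG^{\pm}$ is a free abelian group ($R_{\CC}(G)\cong\ZZ^M$ modulo the primitive vector given by the regular representation is $\ZZ^{M-1}$, and the $\pm$-eigenspaces of an involution on a free abelian group are again free), so the class of the trivial representation is \emph{not} $M$-torsion there; moreover, in the actual target $\QQ\RhG^{\pm}/4\cdot\RhG^{\pm}$ every element is torsion anyway, so torsionness is not the right question --- what has to be shown is that the image of the $\ZZ$ summand lies in the \emph{same} finite subgroup as the image of $\sT_M(d,k)$. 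The paper needs no separate argument for this: the formulas \eqref{eqn:formula-for-rho-Z-d-k-even} and \eqref{eqn:formula-for-rho-Z-d-k-odd} are formulas in all of the splitting invariants $s_{4i}$ (including the extra term $8\cdot s_{4u}\cdot f$ in the second case), and the surjectivity properties discussed around \eqref{eqn:fibering-ni-lens-x-disk-by-cp-x-disk} ensure that the identification of Proposition~\ref{prop:rho-N-d-k} determines the image of $[\wrho_M(d,k)]$ on all of $\ZZ\oplus\sT_M(d,k)$ at once, so the free summand is absorbed into the same comparison with the $k=0$ case rather than handled by a signature computation.
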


\begin{proof}
	We would like to use the same logic as in the proof of the previous lemma. However, the information that we have about the normal invariants in this case is weaker, so we have to modify the arguments somewhat. 
	
	We still have Proposition~\ref{prop:rho-N-d-k} and the formulas~\eqref{eqn:formula-for-rho-Z-d-k-even} and~\eqref{eqn:formula-for-rho-Z-d-k-odd},
	which upon suitable identification of $\sS (\CC P^{d+1})$ with a direct sum of several copies of $\ZZ$ are identical with the formula for the composition
	\begin{equation} \label{eqn:rho-map-k-zero-M}
	\sS (\CC P^{d+1}) \xra{\textup{proj} \circ (p_{G}^{S^{1}})^{!} \circ \eta} \widetilde{\sN} (L^{2d+3}) \xra{[\wrho_{M} (d+2,0)]} \QQ R_{\widehat{\ZZ/M}}^{(-1)^d}/4 \cdot R_{\widehat{\ZZ/M}}^{(-1)^d}
	\end{equation}
	from Theorem 4.12 in~\cite{Macko-Wegner(2011)} via the same bijection of the indexing sets.
	
	What is different is that in this case we only know that the middle group in the composition~\eqref{eqn:rho-map-k-zero} has cardinality $M^{c+1}$. On the other hand we also know that the map $[\wrho_{M} (d+2,0)]$ is injective by~\cite[14E]{Wall(1999)} (see also Theorem 5.2 in~\cite{Macko-Wegner(2011)}). Since the first map is surjective, the image of the composition has the same cardinality. Now, our desired map $[\wrho_{M} (d,k)]$, as any map, factors through its image. Because of the identification via Proposition~\ref{prop:rho-N-d-k} this image is the same as the image of~\eqref{eqn:rho-map-k-zero-M}. This proves the claim.
\end{proof}

\begin{lem} \label{lem:rho-factors-through-finite-N}
	Let $N = 2^K \cdot M$ with $M$ odd. If $k=2l$ the map $[\wrho_N (d,k)]$ factors as
	\[
	\ZZ \oplus \sT_{N} (d,k) \xra{\proj} \ZZ/{2^K} \oplus \sT_{2^K} (d,k) \oplus \sT_{M} (d+2,0) \xra{[\bar \rho_N (d,k)]} \QQ R_{\widehat{\ZZ/N}}^{\pm}/4 \cdot R_{\widehat{\ZZ/N}}^{\pm},		
	\]
	where $\sT_M (d+2,0)$ is a finite group of order $M^{c+1}$ with $c = \lfloor (d-1)/2 \rfloor$.
\end{lem}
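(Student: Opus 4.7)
The plan is to run the arguments of Lemmas~\ref{lem:rho-factors-through-finite-2-K} and~\ref{lem:rho-factors-through-finite-M} in parallel, using the two transfer diagrams~\eqref{diagram_2^K} and~\eqref{diagram_M}. The source splits as $\ZZ \oplus \sT_N(d,k) = \ZZ \oplus \sT_{2^K}(d,k) \oplus T_M(d,k)$, where $\sT_{2^K}(d,k) = T_{2^K}(d,k) \oplus T_2(d,k)$ is the $2$-primary torsion part.

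First I apply the transfer $(p_{2^K}^N)^!$: using diagram~\eqref{diagram_2^K} together with Lemma~\ref{lem:rho-factors-through-finite-2-K}, the composition $[\wrho_{2^K}(d,k)] \circ (p_{2^K}^N)^!$ factors through the finite quotient $\ZZ/2^K \oplus \sT_{2^K}(d,k)$, and the summand $T_M(d,k)$ lies in the kernel of $(p_{2^K}^N)^!$ since its target contains no $M$-torsion. The symmetric argument with $(p_M^N)^!$, diagram~\eqref{diagram_M} and Lemma~\ref{lem:rho-factors-through-finite-M} shows that $[\wrho_M(d,k)] \circ (p_M^N)^!$ factors through $\sT_M(d+2,0)$, with $\sT_{2^K}(d,k)$ in the kernel of $(p_M^N)^!$. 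In particular, the restrictions of $[\wrho_N(d,k)]$ to the $2$-primary and to the $M$-primary representation summands individually have bounded exponents $2^K$ and $M$.

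To assemble these into a factorization of $[\wrho_N(d,k)]$ itself, I invoke Proposition~\ref{prop:rho-N-d-k}: the composition $[\wrho_N(d,k)] \circ \proj_N(d,k)$ is pinned down by the explicit formulas~\eqref{eqn:formula-for-rho-Z-d-k-even} and~\eqref{eqn:formula-for-rho-Z-d-k-odd} via the reduction $\red_N^{S^1}$. Under the CRT identification $\ZZ/N \cong \ZZ/{2^K} \times \ZZ/M$, on the specific characters occurring in these formulas the map $\red_N^{S^1}$ is recovered from the pair $(\red_{2^K}^{S^1}, \red_M^{S^1})$, so the two partial transfer arguments jointly bound the exponent of the image of $[\wrho_N(d,k)]$ by $N = 2^K \cdot M$. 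Since the source is finitely generated, this image is finite, and tracking the projections of the $\ZZ$-summand, the $\sT_{2^K}(d,k)$-summand and the $T_M(d,k)$-summand under the two partial factorizations (with the $\ZZ$ contributing a factor of $2^K$ on one side and $M$ on the other, as in the previous two lemmas) identifies the target as $\ZZ/2^K \oplus \sT_{2^K}(d,k) \oplus \sT_M(d+2,0)$.

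The main obstacle is this assembling step. A priori the image of $[\wrho_N(d,k)]$ could have exponent strictly greater than $N$, because the combined restriction $R_{\widehat{\ZZ/N}} \to R_{\widehat{\ZZ/{2^K}}} \oplus R_{\widehat{\ZZ/M}}$ has a sizable kernel of ``mixed'' characters involving both primes. What rescues us is the explicit form provided by Proposition~\ref{prop:rho-N-d-k}: every element in the image of $[\wrho_N(d,k)]$ is a $\QQ$-combination of the elements $f^j - f^{j-2}$ (together with $f$ itself in the case $n-1 = 4u$) with $f = (1+t)/(1-t)$, and on this constrained subset the two restrictions do separate enough to force the order to divide $N$; the remaining bookkeeping with the cyclotomic integers to pin down the factoring group precisely is essentially routine.
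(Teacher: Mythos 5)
Your proposal correctly identifies the crux --- the restriction map $R_{\widehat{\ZZ/N}} \to R_{\widehat{\ZZ/2^K}} \oplus R_{\widehat{\ZZ/M}}$ is far from injective, so the two transfer diagrams \eqref{diagram_2^K} and \eqref{diagram_M} only bound the exponents of $\res_{2^K} \circ [\wrho_N(d,k)]$ and $\res_M \circ [\wrho_N(d,k)]$, not of $[\wrho_N(d,k)]$ itself --- but it does not actually overcome it. The assertion that ``on this constrained subset the two restrictions do separate enough to force the order to divide $N$'' is exactly the statement that needs proof, and no argument is given; the closing appeal to ``routine bookkeeping with cyclotomic integers'' is where the content of the lemma lives. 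As written, nothing in your argument excludes an element of infinite order in the image of $[\wrho_N(d,k)]$ lying in the kernel of the combined restriction. This is a genuine gap, not a stylistic difference.

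The paper closes this gap by a different mechanism that never needs to reassemble the two restrictions. Via Proposition~\ref{prop:rho-N-d-k} and the re-indexing $i \mapsto i+(l-1)$, the composition $[\wrho_N(d,k)] \circ \proj_N(d,k)$ is identified with the $k=0$ composition
\[
\sS (\CC P^{d+1}) \xra{\textup{proj} \circ (p_{G}^{S^{1}})^{!} \circ \eta} \widetilde{\sN} (L^{2d+3}) \xra{[\wrho_{N} (d+2,0)]} \QQ R_{\widehat{\ZZ/N}}^{(-1)^d}/4 \cdot R_{\widehat{\ZZ/N}}^{(-1)^d},
\]
whose \emph{middle term} $\widetilde{\sN}(L^{2d+3})$ is already a finite group: a $2$-primary summand of exponent dividing $2^K$ plus an odd summand of order $M^{c+1}$ (Theorem 3.2 of~\cite{Macko-Wegner(2011)}). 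Finiteness of the image of $[\wrho_N(d,k)]$ is then automatic --- no separation property of the characters $f^{j}-f^{j-2}$ is required --- and the precise shape of the factoring group follows from naturality together with the injectivity of $[\wrho_M(d+2,0)]$ established in Lemma~\ref{lem:rho-factors-through-finite-M}, which pins the odd part of the image at order $M^{c+1}$. If you want to keep your transfer-based outline, you would have to supply the missing cyclotomic computation; the route through the finite group $\widetilde{\sN}(L^{2d+3})$ makes it unnecessary.
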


\begin{proof}
	Again, we would like to use the same logic as in the proof of the previous two lemmas, but clearly we have to modify the ideas about the factorization, since two different arguments were used.
	
	Even in this case we still have Proposition~\ref{prop:rho-N-d-k} and the formulas~\eqref{eqn:formula-for-rho-Z-d-k-even} and~\eqref{eqn:formula-for-rho-Z-d-k-odd},
	which upon suitable identification of a subgroup of $\sS (\CC P^{d+1})$ with a direct sum of several copies of $\ZZ$ are identical with the formula for the composition
	\begin{equation} \label{eqn:rho-map-k-zero-N}
	\sS (\CC P^{d+1}) \xra{\textup{proj} \circ (p_{G}^{S^{1}})^{!} \circ \eta} \widetilde{\sN} (L^{2d+3}) \xra{[\wrho_{N} (d+2,0)]} \QQ R_{\widehat{\ZZ/N}}^{(-1)^d}/4 \cdot R_{\widehat{\ZZ/N}}^{(-1)^d}
	\end{equation}
	from Theorem 4.12 in~\cite{Macko-Wegner(2011)} via the same bijection of the indexing sets.
	
	In the present case notice that the middle term is a direct sum of a $2$-primary torsion and a group whose cardinality is $M^{c+1}$. Therefore the image will also be a direct sum of a $2$-primary torsion group and an odd order torsion group. Moreover, due to naturality and the fact that $[\wrho_{M} (d+2,0)]$ was injective in the above lemma, our map is also injective on the odd part and so the cardinality of the odd part summand of the image is $M^{c+1}$. On the $2$-primary part we know that the order of every element divides $2^K$. This proves the claim.
\end{proof}
 
\begin{notation} \label{not:K-N}
	Denote
	\begin{align*}
	K_N := \ker [\wrho_N (d,k)] \quad \textup{and} \quad \bar K_N := \ker [\bar \rho_N (d,k)] \quad & \textup{if} \; k=2l, \\
	K_N := \ker [\wrho_N (d,k)] =: \bar K_N \quad & \textup{if} \; k=2l+1. 
	\end{align*}
\end{notation}


\begin{lem} \label{lem:ses-kernels_N}
	If $k=2l$ we have a short exact sequence 
	\[
	0 \ra N \cdot \ZZ \ra K_N \ra \bar K_N \ra 0.
	\]
\end{lem}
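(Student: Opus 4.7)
The plan is to use the factorization $[\wrho_N(d,k)] = [\bar\rho_N(d,k)] \circ \proj$ from Lemma~\ref{lem:rho-factors-through-finite-N}. This implies $K_N = \proj^{-1}(\bar K_N)$; once surjectivity of $\proj$ is established, restricting to $K_N$ yields the desired short exact sequence
\[
0 \to \ker \proj \to K_N \to \bar K_N \to 0,
\]
and the task reduces to identifying $\ker \proj$ with $N \cdot \ZZ$.

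I would analyse $\proj$ primary component by primary component. Writing $\sT_N(d,k) = T_{2^K}(d,k) \oplus T_2(d,k) \oplus T_M(d,k)$ in the source and the target torsion as $T_{2^K}(d,k) \oplus T_2(d,k) \oplus \sT_M(d+2,0)$, Lemma~\ref{lem:rho-factors-through-finite-2-K} shows that the restriction to the $2$-primary piece $\ZZ \oplus T_{2^K}(d,k) \oplus T_2(d,k)$ is the identity on torsion together with reduction mod $2^K$ on the $\ZZ$ summand. Lemma~\ref{lem:rho-factors-through-finite-M}, combined with the cardinality count $|\sT_M(d+2,0)| = M \cdot |T_M(d,k)|$ and the injectivity of $[\wrho_M(d+2,0)]$, shows that the map $\ZZ \oplus T_M(d,k) \to \sT_M(d+2,0)$ fits into the natural splitting $\sT_M(d+2,0) \cong \ZZ/M \oplus T_M(d,k)$, with $\ZZ$ reducing mod $M$ onto the new $\ZZ/M$ summand and $T_M(d,k)$ mapping identically onto the second summand.

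Granting this structural description, $\proj$ is surjective, and its kernel consists of tuples $(n, t_{2^K}, t_2, t_M)$ with $t_{2^K} = t_2 = t_M = 0$, $n \equiv 0 \pmod{2^K}$, and $n \equiv 0 \pmod{M}$. Because $\gcd(2^K, M) = 1$, these congruences collapse to $n \in N \cdot \ZZ$, so $\ker \proj = N \cdot \ZZ \oplus 0$, finishing the proof.

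The main technical hurdle is showing that the kernel of the $M$-part projection $\ZZ \oplus T_M(d,k) \to \sT_M(d+2,0)$ is generated by $(M,0)$ rather than by some $(M,t_0)$ with $t_0 \neq 0$ (the latter would give the same cardinality but produce the wrong kernel inside $\ZZ \oplus T_M(d,k)$). I would derive the correct generator from Proposition~\ref{prop:rho-N-d-k} and the explicit formulas~\eqref{eqn:formula-for-rho-Z-d-k-even}, \eqref{eqn:formula-for-rho-Z-d-k-odd}, invoking the fact that the $L_{2k}(\ZZ) = \ZZ$ summand in $\widetilde\sN_\del(L_M^{2d-1} \times D^{2k})$ corresponds under the transfer to a geometrically independent splitting invariant in $\widetilde\sN(L_M^{2d+3})$, transverse to the image of $T_M(d,k)$.
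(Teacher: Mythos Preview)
Your approach is essentially the paper's: both use the factorization $[\wrho_N(d,k)] = [\bar\rho_N(d,k)] \circ \proj$ from Lemma~\ref{lem:rho-factors-through-finite-N} and read off the short exact sequence on kernels. The paper's own proof is considerably terser than yours: it simply displays the commutative ladder
\[
\xymatrix{
0 \ar[r] & N \cdot \ZZ \ar[r] \ar[d] & \ZZ \oplus \sT_{N}(d,k) \ar[r]^-{\proj} \ar[d]^{[\wrho_N(d,k)]} & \ZZ/{2^K} \oplus \sT_{2^K}(d,k) \oplus \sT_{M}(d{+}2,0) \ar[r] \ar[d]^{[\bar{\rho}_N(d,k)]} & 0 \\
0 \ar[r] & 0 \ar[r] & \QQ R_{\widehat \ZZ_N}^{\pm}/4 \cdot R_{\widehat \ZZ_N}^{\pm} \ar@{=}[r] & \QQ R_{\widehat \ZZ_N}^{\pm}/4 \cdot R_{\widehat \ZZ_N}^{\pm} \ar[r] & 0
}
\]
and states that the rows are exact; the snake lemma then gives the result immediately.

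Where you differ is that you unpack why the top row is exact, i.e.\ why $\proj$ is surjective with kernel $N\cdot\ZZ$, while the paper takes this as implicit in the construction of Lemma~\ref{lem:rho-factors-through-finite-N}. Your primary-decomposition argument is the right way to make this explicit. The ``technical hurdle'' you flag on the $M$-part is a point the paper simply does not address; it is absorbed into the assertion that the top row is exact. In practice this follows because the $\proj$ map in Lemma~\ref{lem:rho-factors-through-finite-N} is, by its proof, identified with the composition $\ZZ(d,k)\hookrightarrow \sS(\CC P^{d+1})\to \widetilde{\sN}(L^{2d+3}_N)$ from~\eqref{eqn:rho-map-k-zero-N}, so the kernel question is the $k=0$ one already handled in~\cite{Macko-Wegner(2011)}. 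Your appeal to Proposition~\ref{prop:rho-N-d-k} and the explicit formulas is exactly this identification, so your route and the paper's coincide once both are fully spelled out.
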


\begin{proof}
	This follows from the commutative ladder
	\[
	\xymatrix{
		0 \ar[r] & N \cdot \ZZ \ar[r] \ar[d] & \ZZ \oplus \sT_{N} (d,k) \ar[r] \ar[d]^{[\wrho_N (d,k)]} & \ZZ/{2^K} \oplus \sT_{2^K} (d,k) \oplus \sT_{M} (d+2,0) \ar[r] \ar[d]^{[\bar{\rho}_N (d,k)]} & 0 \\	
		0 \ar[r] & 0 \ar[r] & \QQ R_{\widehat \ZZ_N}^{\pm}/4 \cdot R_{\widehat \ZZ_N}^{\pm} \ar[r] & \QQ R_{\widehat \ZZ_N}^{\pm}/4 \cdot R_{\widehat \ZZ_N}^{\pm} \ar[r] & 0 
			}
	\]
	with exact rows.
\end{proof}

In order to proceed we need to reformulate known results for $N = 2^{K}$ and $N = M$ odd in terms $[\bar \rho_{2^K} (d,k)]$ and $[\bar \rho_{M} (d,k)]$. 

\begin{prop} \label{prop:K-M}
	We have
	\[
		\bar K_{M} = 0
	\]
	and as a consequence
	\begin{align*}
	K_{M} = \ker [\wrho_{M} (d,k)] = M \cdot \ZZ & \quad \textup{when} \; k=2l, \\
	K_{M} = 0 & \quad \textup{when} \; k=2l+1.
	\end{align*}
\end{prop}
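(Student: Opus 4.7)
The plan is to first establish the main claim $\bar K_M = 0$, from which both consequences follow by routine arguments, and to handle the case $k = 2l+1$ by observing that the source is already trivial.

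For $k = 2l$, the strategy is to identify the map $[\bar \rho_M(d,k)]$, via the factorization in Lemma~\ref{lem:rho-factors-through-finite-M}, with the classical map on normal invariants induced by the $\rho$-invariant in the case $k=0$ and dimension $d+2$. Concretely, the formulas \eqref{eqn:formula-for-rho-Z-d-k-even} and \eqref{eqn:formula-for-rho-Z-d-k-odd} agree with the formula from Theorem~4.12 of \cite{Macko-Wegner(2011)} under the bijection $i \mapsto i+(l-1)$ between the indexing sets, which matches $\sT_M(d+2,0)$ (as the image of the projection from $\sS(\CC P^{d+1})$) with the target appearing in Lemma~\ref{lem:rho-factors-through-finite-M}. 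Under this identification, $[\bar \rho_M(d,k)]$ is exactly $[\wrho_M(d+2,0)]$ restricted to $\sT_M(d+2,0)$. By \cite[14E]{Wall(1999)} (see also Theorem~5.2 of \cite{Macko-Wegner(2011)}), the map $[\wrho_M(d+2,0)]$ is injective on $\sT_M(d+2,0)$ for $M$ odd, so $\bar K_M = 0$.

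For the consequences: in the case $k = 2l$, feeding $\bar K_M = 0$ into Lemma~\ref{lem:ses-kernels_N} with $N = M$ yields the short exact sequence $0 \to M \cdot \ZZ \to K_M \to 0 \to 0$, whence $K_M = M \cdot \ZZ$. In the case $k = 2l+1$, the source $\widetilde{\sN}_\partial(L_M \times D^{2k+1}) \cong \bigoplus_{J_2^{tM}(d,k,\mathrm{odd})} \ZZ/2$ from \eqref{eqn:end-result-odd-disk} is trivial, since for $M$ odd the indexing set is empty: the mod~$2$ cohomology of $L_M$ vanishes in positive degrees below $2d-1$ because $L_M$ is $2$-locally a sphere. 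Hence $K_M = \bar K_M = 0$ vacuously.

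The main obstacle is the identification step in the first paragraph: one must verify that the formula $\rho_M(d,k)$ from \eqref{eqn:formula-for-rho-Z-d-k-even} and \eqref{eqn:formula-for-rho-Z-d-k-odd} genuinely reproduces the classical Wall formula for $[\wrho_M(d+2,0)]$ on $\sT_M(d+2,0)$ under the natural bijection of indexing sets. Once this bookkeeping is in place, injectivity of the classical map transfers to $[\bar \rho_M(d,k)]$ and the remaining deductions are formal.
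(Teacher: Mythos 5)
Your treatment of the case $k=2l$ is essentially the paper's own argument: identify $[\bar\rho_M(d,k)]$ with the injective map $[\wrho_M(d+2,0)]$ via the formula comparison of Lemma~\ref{lem:rho-factors-through-finite-M}, conclude $\bar K_M=0$, and feed this into Lemma~\ref{lem:ses-kernels_N} to get $K_M = M\cdot\ZZ$. That part is fine.

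The case $k=2l+1$ contains a genuine error. You argue that the source is trivial by invoking \eqref{eqn:end-result-odd-disk}, but that display concerns $\sS^s_\del(L\times D^{2k+1})$, i.e.\ odd-dimensional disks $D^{m}$ with $m=2k+1$. The case $k=2l+1$ of Proposition~\ref{prop:K-M} still lives on the even-dimensional disk $D^{2k}=D^{4l+2}$; only the parity of $k$ changes, not the parity of $m=2k$. By \eqref{red-ni-lens-spaces} the source of $[\wrho_M(d,k)]$ there is $\widetilde{\sN}_\del(L^{2d-1}_M\times D^{4l+2})\cong T_M(d,k)$, a group of order $M^c$ with $c=\lfloor (d-1)/2\rfloor$, which is nontrivial as soon as $d\geq 3$ and $M>1$ (the pieces that do vanish for $M$ odd are $T_{2^K}$, $T_2$ and, for $k$ odd, $T_F$ --- but not $T_M$). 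So $K_M=0$ is not vacuous; it is the assertion that $[\wrho_M(d,k)]$ is injective on a group of order $M^c$. The paper proves this by observing that, after a suitable re-indexing, $[\wrho_M(d,k)]$ for $k=2l+1$ has the same formula as the injective map $[\wrho_M(d+2,0)]$; you need this (or an equivalent) argument to close the gap.
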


\begin{proof}
	The structure sets $\sS^{s} (L^{2d-1}_{M} \times D^{m})$ were studied in~\cite[\S 3]{Madsen-Rothenberg(1989)}, but their calculation is not suitable for us, so we use a different argument. Note that in the case $k=0$ we have that the map $[\wrho_M (d+2,0)]$ is injective  by~\cite[14E]{Wall(1999)}. When $k=2l$, then arguing as in the proof of Lemma~\ref{lem:rho-factors-through-finite-M} we observe that upon suitable identification of a summand of $\sS (\CC P^{d+1})$ with a direct sum of several copies of $\ZZ$ the map $\rho_M (d,k)$ has the same formula as the composition of $[\wrho_M (d+2,0)]$ with $\textup{proj} \circ (p_{G}^{S^{1}})^{!} \circ \eta$ for $k=0$ and so the map $[\bar \rho_M (d,k)]$ is identified with this map. When $k=2l+1$ then upon a suitable re-indexing the map $[\wrho_{M} (d,k)]$ has the same formula as $[\wrho_M (d+2,0)]$.
\end{proof} 

\begin{prop} \label{prop:K-2-K} For any $k =2l \geq 0$ we have
	\[
	\bar K_{2^K} = \bigoplus_{i=1}^{c_N(d,k)+1} \ZZ/2^{\textup{min} \{ 2i , K \}} \oplus \bigoplus_{i=1}^{c_2 (d,k)} \ZZ/2.
	\]
\end{prop}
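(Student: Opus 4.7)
The plan is to identify the map $[\bar \rho_{2^K}(d,k)]$ with the corresponding map for the case $k=0$ via Lemma~\ref{lem:rho-factors-through-finite-2-K}, and then apply the known kernel calculation from the literature.

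First I would split off the $T_2(d,k)$-summand of the domain. The formulas~\eqref{eqn:formula-for-rho-Z-d-k-even} and~\eqref{eqn:formula-for-rho-Z-d-k-odd} for $\rho_N (d,k)$ involve only the $\ZZ$-generators $s_{4i}$; there are no contributions from the $\ZZ/2$ splitting invariants $\bt_{4i-2}$. Proposition~\ref{prop:rho-N-d-k} says that $[\bar \rho_{2^K}(d,k)]$ is detected by this same formula precomposed with $\proj_{N}(d,k)$, so the $T_2(d,k)$-summand lies entirely in its kernel and splits off as a direct summand, contributing $\bigoplus_{i=1}^{c_2(d,k)} \ZZ/2$ to $\bar K_{2^K}$.

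Next I would turn to the remaining factor $\ZZ/{2^K} \oplus T_{2^K}(d,k)$, which is a direct sum of $c_N(d,k)+1$ copies of $\ZZ/{2^K}$. The key input is the identification established in the proof of Lemma~\ref{lem:rho-factors-through-finite-2-K}: via the bijection $i \mapsto i+(l-1)$ between $I_{4}^{S}(d+2,0)$ and $I_{4}^{S}(d,k)$, the restriction of $[\bar \rho_{2^K}(d,k)]$ to this factor agrees, up to re-indexing of source and target, with the analogous composition $[\wrho_{2^K}(d+2,0)] \circ \textup{proj} \circ (p_{G}^{S^{1}})^{!} \circ \eta$ in the case $k=0$ and dimension $2(d+2)-1$. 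A direct count gives $c_N(d+2,0) = c_N(d,k)+1$, so the cardinalities of the indexing sets match. The kernel of the $k=0$ map has been computed in \cite[Theorem 4.12]{Macko-Wegner(2011)} (building on \cite[Chapter 14D]{Wall(1999)} and \cite{Macko-Wegner(2009)}) and yields exactly $\bigoplus_{i=1}^{c_N(d,k)+1} \ZZ/2^{\textup{min} \{ 2i, K \}}$ when one uses the fresh index set $\{ 1, \ldots, c_N(d,k)+1 \}$.

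The main obstacle is the bookkeeping required to confirm that the shift of indices, combined with the re-indexing of the target, preserves the divisibility pattern $\textup{min}\{2i, K\}$. This reduces to tracing the $2$-adic valuations of the polynomial coefficients $f^{d+k-2i} - f^{d+k-2i-2}$ in $\QQ R_{\widehat{\ZZ/2^K}}^{\pm}$ after re-indexing, and observing that the starting index $i=1$ in the fresh enumeration corresponds to the lowest-dimensional generator, so the pattern from the known $k=0$ calculation transfers verbatim. Once this has been verified, combining the two parts produces the claimed direct sum decomposition for $\bar K_{2^K}$.
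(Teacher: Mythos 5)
Your argument is correct and follows essentially the same route as the paper: the paper's proof consists of observing that the $\ZZ/2$-summands $T_2(d,k)$ lie in the kernel because the $\rho$-formula does not involve the $\bt_{4i-2}$, and of deferring the $\ZZ/2^{\min\{2i,K\}}$-part to the proofs of Propositions 6.1 and 6.2 of \cite{BMNR(2018)}, which carry out precisely the reduction to the $k=0$ computation via the index bijection that you spell out (your check $c_N(d+2,0)=c_N(d,k)+1$ is the relevant bookkeeping). The only cosmetic difference is that you cite the $k=0$ result in \cite{Macko-Wegner(2011)} directly rather than the $N=2^K$, $m\geq 1$ propositions of the prequel.
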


\begin{proof}
	This follows from the proofs of Propositions 6.1 and 6.2 in \cite{BMNR(2018)}. The second summand comes from the fact that the formula for $rho$-invariant does not depend on the invariants $\bt_{4i-2}$. 
\end{proof}

Next we would like to combine the two results for $N = 2^K$ and $N = M$ odd. 

\begin{thm} \label{thm:K-bar-N} For any $k \geq 0$ we have
	\[
	(p_{2^K}^{N})^{!} \co \bar K_{N} \xra{\cong} \bar K_{2^K}
	\]
\end{thm}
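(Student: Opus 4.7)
The strategy uses three ingredients: (i) the map $[\bar\rho_N(d,k)]$ (or $[\wrho_N(d,k)]$ when $k=2l+1$) is injective on the $M$-primary summand of its source, so $\bar K_N$ is entirely $2$-primary; (ii) the transfer $(p_{2^K}^N)^!$ annihilates that $M$-primary summand because $\widetilde\sN_\del(L_{2^K}\times D^{2k})$ has trivial odd torsion (the order factor $M^c$ becomes $1$ when $M=1$); (iii) on the $2$-primary summands the restricted transfer is an isomorphism that intertwines $[\bar\rho_N]$ and $[\bar\rho_{2^K}]$.

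First I would formalize (i) and (ii) into a commutative square. For $k=2l$ this combines the naturality diagram for $(p_{2^K}^N)^!$ and the $\rho$-invariants with the factorizations of Lemmas~\ref{lem:rho-factors-through-finite-2-K} and~\ref{lem:rho-factors-through-finite-N}, giving a square whose top row is the restricted transfer
\[
\ZZ/{2^K}\oplus \sT_{2^K}(d,k)\oplus \sT_M(d+2,0)\lra \ZZ/{2^K}\oplus \sT_{2^K}(d,k)
\]
and whose vertical arrows are $[\bar\rho_N(d,k)]$ and $[\bar\rho_{2^K}(d,k)]$. For $k=2l+1$ the sources are already finite and the analogous, simpler square uses $T_{2^K}(d,k)\oplus T_2(d,k)\oplus T_M(d,k)\to T_{2^K}(d,k)\oplus T_2(d,k)$ with verticals $[\wrho_N(d,k)]$ and $[\wrho_{2^K}(d,k)]$. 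In both cases this produces a well-defined restriction $\bar K_N\to \bar K_{2^K}$.

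The injectivity in (i) follows by the argument from the proof of Lemma~\ref{lem:rho-factors-through-finite-N} (or Lemma~\ref{lem:rho-factors-through-finite-M} for $N=M$ odd): via Proposition~\ref{prop:rho-N-d-k} and the indexing-set bijection, the restriction of $[\bar\rho_N(d,k)]$ to $\sT_M(d+2,0)$ coincides with the composition $\sT_M(d+2,0)\hookrightarrow \widetilde\sN(L_M^{2d+3})\xra{[\wrho_M(d+2,0)]}\QQ R_{\widehat\ZZ_M}^{(-1)^d}/4\cdot R_{\widehat\ZZ_M}^{(-1)^d}$, whose second arrow is injective by~\cite[14E]{Wall(1999)}. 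Hence $\bar K_N$ is contained in the $2$-primary summand, and by (ii) the relevant transfer is confined to the $2$-primary summands of source and target.

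The main obstacle is (iii). For this I would use Proposition~\ref{prop:rho-N-d-k} to express both factored $\rho$-maps as $\red_{\bullet}^{S^1}\circ\rho_{S^1}(d,k)$ composed with projection modulo $4\cdot\RhG^\pm$, and exploit the transitivity that the reduction $R_{\widehat S^1}\to R_{\widehat\ZZ_{2^K}}$ factors as $R_{\widehat S^1}\to R_{\widehat\ZZ_N}\to R_{\widehat\ZZ_{2^K}}$. Combined with the standard $2$-local transfer principle, namely that the $M$-fold covering $p_{2^K}^N$ with $\gcd(M,2)=1$ induces an isomorphism on the $2$-primary splitting invariants $\bs_{4i}$ and $\bt_{4i-2}$, this identifies the restricted transfer with an isomorphism of the $2$-primary summands which intertwines the two $\rho$-maps. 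The resulting equality $\bar K_N=\bar K_{2^K}$ as subgroups of the common $2$-primary domain then gives the theorem.
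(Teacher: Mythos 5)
Your overall architecture matches the paper's: split the finite source into its $2$-primary and odd-primary summands, kill the odd part using the injectivity of $[\wrho_{M} (d+2,0)]$ from \cite[14E]{Wall(1999)} (the paper routes this through Proposition~\ref{prop:K-M} and the commutative diagram with $(p_{M}^{N})^{!}$, but the underlying fact is the same), and then compare the two $\rho$-maps on the $2$-primary part via the transfer $(p_{2^K}^{N})^{!}$.

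However, step (iii) has a genuine gap. Intertwining gives you only the easy inclusion: commutativity of the square with vertical arrows $[\bar\rho_N (d,k)]$ and $[\bar\rho_{2^K} (d,k)]$ and bottom arrow the reduction $\QQ R_{\widehat \ZZ_N}^{\pm}/4 \cdot R_{\widehat \ZZ_N}^{\pm} \ra \QQ R_{\widehat \ZZ_{2^K}}^{\pm}/4 \cdot R_{\widehat \ZZ_{2^K}}^{\pm}$ shows that $(p_{2^K}^{N})^{!}$ carries $\bar K_N$ \emph{into} $\bar K_{2^K}$, and injectivity of the restriction follows once the source transfer is an isomorphism on $2$-primary parts. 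What it does not give is surjectivity onto $\bar K_{2^K}$: for that you must show that an element $x$ of the $2$-primary summand with $[\bar\rho_{2^K} (d,k)]((p_{2^K}^{N})^{!}x)=0$ already satisfies $[\bar\rho_N (d,k)](x)=0$. The reduction map between the two quotients of representation rings is far from injective ($R_{\widehat \ZZ_N}$ has $N$ characters against $2^K$ for $R_{\widehat \ZZ_{2^K}}$), so vanishing of the image in the smaller target does not formally imply vanishing in the larger one; your appeal to the factorization $R_{\widehat S^1} \ra R_{\widehat \ZZ_N} \ra R_{\widehat \ZZ_{2^K}}$ and to the ``$2$-local transfer principle'' only concerns the sources and the commutativity, not this divisibility question in the target. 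This is exactly where the paper invokes the purely algebraic Lemmas 6.2 and 6.3 of \cite{Macko-Wegner(2011)}, which establish a local-to-global detection statement: an element of $\QQ R_{\widehat \ZZ_N}^{\pm}$ of the relevant form lies in $4 \cdot R_{\widehat \ZZ_N}^{\pm}$ if and only if its reductions to $\QQ R_{\widehat \ZZ_{2^K}}^{\pm}$ and $\QQ R_{\widehat \ZZ_M}^{\pm}$ lie in $4 \cdot R_{\widehat \ZZ_{2^K}}^{\pm}$ and $4 \cdot R_{\widehat \ZZ_M}^{\pm}$ respectively. Without that ingredient (or a substitute), your argument proves $\bar K_N \hookrightarrow \bar K_{2^K}$ but not the claimed isomorphism.
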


\begin{proof}
	We use the same logic as the proof of Proposition 6.1 in~\cite{Macko-Wegner(2011)}. When $k=2l$ the situation can be summarized in the diagram
	\[
	\xymatrix{
		\ZZ/2^{K} \oplus \sT_{2^K} (d,k) \ar[d]^{[\bar \rho_{2^K} (d,k)]} & \ZZ/{2^K} \oplus \sT_{2^K} (d,k) \oplus \sT_{M} (d+2,0) \ar[l]_-{(p_{2^K}^{N})^{!}} \ar[r]^-{(p_{M}^{N})^{!}} \ar[d]^{[\bar \rho_N (d,k)]} &  \sT_{M} (d+2,0) \ar[d]^{[\bar \rho_M (d,k)]} \\ 
		\QQ R_{\widehat{\ZZ/{2^K}}}^{\pm}/4 \cdot R_{\widehat{\ZZ/{2^K}}}^{\pm} & \QQ R_{\widehat{\ZZ/N}}^{\pm}/4 \cdot R_{\widehat{\ZZ/N}}^{\pm} \ar[l] \ar[r] & \QQ R_{\widehat{\ZZ/M}}^{\pm}/4 \cdot R_{\widehat{\ZZ/M}}^{\pm}.
	}
	\]
	Just as in that proof the splitting of the middle term into the $2$-primary part and the odd part is used to show 
	\[
	\bar K_N = \big( \ker [\bar \rho_N (d,k)]|_{\ZZ/2^{K} \oplus \sT_{2^K} (d,k)} \big) \oplus \big( \ker [\bar \rho_N (d,k)]|_{\sT_{M} (d+2,0)} \big)
	\] 
	in exactly the same way.
	
	Next it needs to be shown that 
	\[
		\ker [\bar \rho_N (d,k)]|_{\ZZ/2^{K} \oplus \sT_{2^K} (d,k)} \cong  \big( (p_{2^K}^{N})^{!} \big)^{-1} \bar K_{2^K}
	\]
	and
	\[
		\ker [\bar \rho_N (d,k)]|_{\sT_{M} (d+2,0)} \cong  \big( (p_{M}^{N})^{!} \big)^{-1} \bar K_{M} = 0.
	\]
	The second equation follows from the commutativity of the diagram and from Proposition~\ref{prop:K-M}. The first equation is proved in exactly the same way as in~\cite{Macko-Wegner(2011)}, it is basically a purely algebraic statement whose proof involves purely algebraic Lemma 6.2 and 6.3 of that paper. 
	
	When $k=2l+1$ then at the beginning we already have 
	\[
	\sT_N (d,k) \cong \sT_{2^K} (d,k) \oplus \sT_M (d,k)
	\]
	and then we use the same logic.
\end{proof}

\begin{cor} \label{cor:T-N}
When $k=2l$ we have
\[
K_N \cong \ZZ \oplus \bigoplus_{i =1}^{c_{N} (d,k)} \ZZ/{2^{\min\{K,2i\}}} \oplus \bigoplus_{i =1}^{c_2 (d,k)} \ZZ/2.
\]
When $k=2l+1$ we have
\[
K_N \cong \bigoplus_{i =1}^{c_{N} (d,k)} \ZZ/{2^{\min\{K,2i\}}} \oplus \bigoplus_{i =1}^{c_{2} (d,k) + 1} \ZZ/2.
\]
\end{cor}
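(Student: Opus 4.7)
The plan is to reduce the computation to $\bar K_N$ in both parities of $k$ and then handle the remaining extension (if any) by comparison with the $k=0$ case. The two prior inputs I would lean on are Theorem~\ref{thm:K-bar-N}, identifying $\bar K_N$ with $\bar K_{2^K}$, and Proposition~\ref{prop:K-2-K}, which computes $\bar K_{2^K}$ in the even-$k$ case.

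For $k = 2l$, I would first combine Lemma~\ref{lem:ses-kernels_N} with Theorem~\ref{thm:K-bar-N} and Proposition~\ref{prop:K-2-K} to obtain the short exact sequence
\[
0 \to \ZZ \to K_N \to \bigoplus_{i=1}^{c_N(d,k)+1} \ZZ/2^{\min\{2i,K\}} \oplus \bigoplus_{i=1}^{c_2(d,k)} \ZZ/2 \to 0.
\]
Since the quotient is finite, $K_N$ has rank one and splits as $\ZZ \oplus F_N$ for some finite abelian $F_N$. What remains is to identify which cyclic summand of the quotient is absorbed into the image of $\ZZ$. My plan is to transport this information from the $k=0$ setting: the proof of Lemma~\ref{lem:rho-factors-through-finite-N} shows that, via the index bijection $i \mapsto i + (l-1)$, the formula for $[\bar \rho_N(d,k)]$ matches that of the $k=0$ map $[\wrho_N(d+2,0)]$ precomposed with $\textup{proj} \circ (p_G^{S^1})^! \circ \eta$ out of $\sS(\CC P^{d+1})$. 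By naturality of $\rho$, the extension class is preserved under this identification, and the $k=0$ calculation in \cite{Macko-Wegner(2011)} shows that $\ZZ$ absorbs the top torsion summand $\ZZ/2^{\min\{K, 2(c_N(d,k)+1)\}}$. Transporting this back gives $F_N \cong \bigoplus_{i=1}^{c_N(d,k)} \ZZ/2^{\min\{K,2i\}} \oplus \bigoplus_{i=1}^{c_2(d,k)} \ZZ/2$, as claimed.

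For $k = 2l+1$, Notation~\ref{not:K-N} gives $K_N = \bar K_N$ directly, since $T_F(d,k) = 0$ makes the source of $[\wrho_N(d,k)]$ already finite and no splitting issue arises. Theorem~\ref{thm:K-bar-N} still yields $\bar K_N \cong \bar K_{2^K}$, and I would compute $\bar K_{2^K}$ in this parity by rerunning the argument behind Proposition~\ref{prop:K-2-K} (i.e.\ of Propositions~6.1 and 6.2 of \cite{BMNR(2018)}): the $\rho$-formula is independent of the invariants $\bt_{4i-2}$, so all of $T_2(d,k)$ lies in the kernel, while the dependence on $T_{2^K}(d,k)$ mirrors the even case and produces $\bigoplus_{i=1}^{c_N(d,k)} \ZZ/2^{\min\{K,2i\}}$. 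The extra $\ZZ/2$ bringing the count up to $c_2(d,k)+1$ comes from the Arf summand $L_{4l+2}(\ZZ) = \ZZ/2$ in $\sN_\del(L \times D^{4l+2})$, which survives in $\widetilde \sN_\del$ when $n \equiv 1 \pmod 4$ and on which $\rho$ vanishes.

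The main technical obstacle is the extension resolution in the $k=2l$ case: one must pin down not merely the source and target of the sequence but the extension class itself. My strategy of transporting the $k=0$ answer across the index bijection $i \mapsto i+(l-1)$ is designed to avoid a direct computation of this class in $\textup{Ext}^1(\bar K_{2^K}, \ZZ)$, but it will require careful bookkeeping to confirm that the $N \cdot \ZZ$ subgroup of Lemma~\ref{lem:ses-kernels_N} corresponds, under the naturality of $\rho$ combined with Proposition~\ref{prop:rho-N-d-k}, to the analogous $\ZZ$ image appearing in the $k=0$ short exact sequence of \cite{Macko-Wegner(2011)}.
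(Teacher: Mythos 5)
Your overall skeleton agrees with the paper's: combine Lemma~\ref{lem:ses-kernels_N}, Theorem~\ref{thm:K-bar-N} and Proposition~\ref{prop:K-2-K} to get the short exact sequence $0 \to \ZZ \to K_N \to \bar K_{2^K} \to 0$ for $k=2l$, and use $K_N = \bar K_N$ for $k=2l+1$. Your treatment of the odd case is fine. The problem is the step that resolves the extension when $k=2l$, which is exactly the point where the real content lies. You propose to read the extension class off from the $k=0$ calculation of \cite{Macko-Wegner(2011)} by ``naturality of $\rho$'' across the index bijection $i \mapsto i+(l-1)$. But when $k=0$ the group $\widetilde{\sN}(L^{2d+3})$ is finite --- there is no $\ZZ$ summand in the normal invariants at all --- so there is no extension of $\ZZ$ by a finite group in that setting, and in particular no statement of the form ``$\ZZ$ absorbs the top torsion summand'' to transport. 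What the index bijection and Proposition~\ref{prop:rho-N-d-k} actually identify is the \emph{formula} for $\rho_N(d,k)$ on the free group $\ZZ(d,k)$; the extension class of $K_N$ is governed not by that formula alone but by the projection $\proj_N(d,k) \co \ZZ(d,k) \to \widetilde{\sN}_{\del}(L \times D^{2k})$, and this projection is genuinely different for $k=0$ (every coordinate $s_{4i}$ lands in torsion) and for $k=2l>0$ (the coordinate corresponding to $t_{4l}$ maps isomorphically onto the summand $T_F(d,k)=\ZZ$). So ``the extension class is preserved under this identification'' is not a valid deduction, and as written the argument does not pin down which cyclic quotient of $\bar K_N$ survives as torsion in $K_N$.

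The paper closes this gap differently: for $N=2^K$ and $k=2l \geq 2$ the group $K_{2^K}(d,k)$, including its interaction between the $\ZZ$ and the $2$-primary torsion, was already computed directly in \cite{BMNR(2018)} by determining $\ker \rho_{2^K}(d,k)$ inside the free group $\ZZ(d,k)$ and then pushing forward along $\proj_{2^K}(d,k)$; that computation is where the extension is actually resolved (and it does give the answer you state, with the quotient $\bar K_N/\mathrm{tors}(K_N)$ cyclic of order $2^{\min\{2(c_N(d,k)+1),K\}}$). The general case then follows from the commutative ladder in the proof of Lemma~\ref{lem:ses-kernels_N} together with the transfer isomorphism of Theorem~\ref{thm:K-bar-N}. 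To repair your argument, replace the appeal to the $k=0$ computation by an appeal to the $N=2^K$, $k=2l$ computation of \cite{BMNR(2018)} (Propositions 6.1 and 6.2 there), or else carry out the pushforward of $\ker \rho_N(d,k)$ along $\proj_N(d,k)$ explicitly using the triangular generators from Lemmas 6.2 and 6.3 of \cite{Macko-Wegner(2011)}; either way the input you need is a computation in which the $\ZZ$ summand is actually present.
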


\begin{proof}
	The case $k=2l+1$ follows from $K_N = \bar K_N$. The numbers $c_N (d,k)$ and $c_{2} (d,k)+1$ are cardinalities of the indexing sets from~\eqref{eqn:torsion-of-ni-general}. The case $k=2l$ goes as follows. By Lemma~\ref{lem:ses-kernels_N} the group $K_N$ is an extension of $\ZZ$ and $\bar K_N$, which is determined by Theorem~\ref{thm:K-bar-N}. Now inspecting the calculations in~\cite{BMNR(2018)} shows that these are essentially done by studying the kernel of $\rho_{2^K} (d,k)$ and then passing to the quotient. There the extension is as we claim. Inspecting the commutative ladder in the proof of Lemma~\ref{lem:ses-kernels_N} proves the general case. 
\end{proof}

\begin{proof}[Proof of Theorem~\ref{thm:main-thm}]
	The proof is the same as the proof of Theorem~1.1 in the paper~\cite{BMNR(2018)} except now we have $L$-groups calculated in Theorem~\ref{L(G)} and the group $\bar T (d,k)$ is denoted $K_N$ in this paper and it is calculated in Corollary~\ref{cor:T-N}. In the case $k=2l+1$ the $\ZZ/2$-summand from Corollary~\ref{cor:T-N} corresponding to $\bt_{4l+2}$ is separated in the statement of Theorem~\ref{thm:main-thm} as being detected by $\bbr_0$.
\end{proof}

\begin{proof}[Proof of Corollary~\ref{cor:higehr-str-sets-L-times-S}]
	The proof is the same as the proof of Corollary 1.2 in the paper~\cite{BMNR(2018)} except the normal invariants $\sN (L_N^{2d-1})$ now contain a $KO$-theory summand, see Theorem 3.2 in~\cite{Macko-Wegner(2011)}. The invariant $\br'''$ was denoted $\bt_{(\textup{odd})}$ in~\cite{Macko-Wegner(2011)}.  
\end{proof}


\section{Final Remarks} \label{sec:final-remarks}

One obvious future direction would be to try to obtain a better geometric description of the invariants $\bbr$ from Theorem \ref{thm:main-thm}.
In \cite{Macko-Wegner(2011)} there was offered an inductive obstruction theoretic description of the corresponding invariants when $m=0$. Such a description works also in the present case with the proof very similar to the case $m=0$, so we refrain from repeating it here and we refer the reader to \cite{Macko-Wegner(2011)}. Of course, even better would be a non-inductive description, but this remains open even in the case $m=0$.

Another improvement that one might seek is a deeper understanding of the $KO$-part of the normal invariants. This involves different techniques, so we postpone it to further projects.

\small
\bibliography{lens-spaces}
\bibliographystyle{alpha}

\end{document}